\newcommand{\R}{\mathbb{R}}
\newcommand{\inner}[2]{\langle #1\,|\,#2\rangle} 
\newcommand{\norm}[1]{\|{#1}\|}
\newcommand{\normq}[1]{\|{#1}\|^2}
\newcommand{\HH}{{\mathcal{H}}} 
\newcommand{\tos}{\rightrightarrows} 
\newcommand{\wto}{\rightharpoonup}
\newcommand{\comenta}[1]{} 
\newcommand{\lab}[1]{\label{#1}}
\newcommand{\bprop}{\begin{proposition}}
\newcommand{\eprop}{\end{proposition}}
\newcommand{\blemm}{\begin{lemma}}
\newcommand{\elemm}{\end{lemma}}
\newcommand{\bdefi}{\begin{definition}}
\newcommand{\edefi}{\end{definition}}
\newcommand{\btheo}{\begin{theorem}}
\newcommand{\etheo}{\end{theorem}}
\newcommand{\bproo}{\begin{proof}}
\newcommand{\eproo}{\end{proof}}
\newcommand{\brema}{\begin{remark}}
\newcommand{\erema}{\end{remark}}
\newcommand{\bitem}{\begin{itemize}}
\newcommand{\eitem}{\end{itemize}}
\newcommand{\bexam}{\begin{example}}
\newcommand{\eexam}{\end{example}}
\newcommand{\bassu}{\begin{assumption}}
\newcommand{\eassu}{\end{assumption}}
\newcommand{\bcoro}{\begin{corollary}}
\newcommand{\ecoro}{\end{corollary}}
\newcommand{\mgap}{\vspace{.1in}}
\newcommand{\beq}{\begin{equation}}
\newcommand{\eeq}{\end{equation}}
\newtheorem{theorem}{Theorem}[section] 
\newtheorem{lemma}[theorem]{Lemma}
\newtheorem{corollary}[theorem]{Corollary}
\newtheorem{proposition}[theorem]{Proposition}
\newtheorem{remark}[theorem]{Remark}
\newtheorem{definition}[theorem]{Definition}
\newtheorem{assumption}[theorem]{Assumption}
\newtheorem{example}[theorem]{Example}
\newcommand{\HHH}{\boldsymbol{\HH}}
\newcommand{\sumn}{{\sum_{i=1}^n\,}}
\def\widebreve#1{\mathop{\vbox{\m@th\ialign{##\crcr\noalign{\kern\p@}%
  \brevefill\crcr\noalign{\kern0.1\p@\nointerlineskip}%
  $\hfil\displaystyle{#1}\hfil$\crcr}}}\limits}
\def\brevefill{$\m@th \setbox\z@\hbox{}%
 \hfill\scalebox{0.7}{\rotatebox[origin=c]{90}{(}} \kern4pt $}
\begin{document}

\title{Projective splitting with backward, half-forward and proximal-Newton steps}
\author{
    M. Marques Alves
\thanks{
Departamento de Matem\'atica,
Universidade Federal de Santa Catarina,
Florian\'opolis, Brazil, 88040-900 ({\tt maicon.alves@ufsc.br}).
This work was partially supported by CNPq grant 308036/2021-2.}
}

\maketitle

\begin{abstract}
We propose and study the weak convergence of a projective splitting 
algorithm~\cite{combettes2014Coupled, eck.sva-fam.mp08} for solving multi-term composite monotone inclusion problems involving the finite sum of $n$ maximal monotone operators, each of which having an inner four-block structure: sum of maximal monotone, Lipschitz continuous, cocoercive and smooth differentiable operators. 
We show how to perform backward and half-forward~\cite{davis2018half} steps with respect to the maximal monotone and 
Lipschitz$+$cocoercive components, respectively, while performing proximal-Newton steps with respect to smooth differentiable  
blocks.
\\
\\
  2000 Mathematics Subject Classification: 47H05, 49M27, 47N10.
   \\
 \\
  Key words: projective splitting algorithm, monotone inclusions, operator-splitting, proximal-Newton.
\end{abstract}

\pagestyle{plain}


\section{Introduction}
 \label{sec:int}

Consider the multi-term composite monotone inclusion problem 
\begin{align}
  \label{eq:probi}
    0\in \sum_{i=1}^n\,G_i^*(\underbrace{A_i+B_i+C_i + D_i}_{T_i}) G_i (z),
\end{align}
where, for each $i=1,\dots, n$, $G_i$ is bounded linear, $A_i$ is (set-valued) maximal monotone, $B_i$ is (point-to-point) 
monotone and Lipschitz, $C_i$ is (point-to-point) cocoercive and $D_i$ is (point-to-point) monotone and continuously
differentiable (more details later in Section \ref{sec:alg}). 
Problem \eqref{eq:probi} appears in different fields of applied mathematics and optimization, including inverse problems and machine learning (see, e.g.,~\cite{combettes2014Coupled,johnstone-stochastics.preprint21,mon.sva-newton.siam12}).

In this paper, we present and study a new version of the 
projective splitting (PS) algorithm (see, e.g.,~\cite{combettes2014Coupled,eck.sva-fam.mp08,eck.sva-gen.sjco09, eckstein2022forward}) for solving \eqref{eq:probi}. 
Our main algorithm (Algorithm \ref{alg_main} below) splits \eqref{eq:probi} into its $n$ pieces, by evaluating each linear operator $G_i$, as well as the Lipschitz and cocoercive operators $B_i$ and $C_i$, respectively, and by computing the resolvent of $A_i$ plus a linearization of the continuously differentiable operator $D_i$, which we refer to as a proximal-Newton step.
Since PS algorithms are of high interest nowadays, it follows that special instances of \eqref{eq:probi} have been previously considered by other authors (more details later in this section), but up to our knowledge none of them considered proximal-Newton steps with respect to the smooth differentiable mappings $D_i$ in \eqref{eq:probi}.

\mgap

\noindent
{\bf The extended-solution set.} PS algorithms can be derived by noting that (under the additional assumption that $G_n=I$), 
a vector $z$ is a solution of problem \eqref{eq:probi} if and only if there exist $w_1,\dots, w_{n-1}$ such that  the $n$-tuple
$(z,w_1,\dots, w_{n-1})$ belongs to 
the \emph{extended-solution set}~\cite{combettes2014Coupled, eck.sva-gen.sjco09}:
\begin{align}
  \label{eq:def.Si}
    \mathcal{S}:=\left\{(z,w_1,\dots, w_{n-1}) \mid w_i\in 
T_i(G_iz),\; i=1,\dots,n,\; 
     w_n:=-\sum_{i=1}^{n-1}\,G_i^*w_i\right\}.
 \end{align}
Since $\mathcal{S}$ is a closed and convex subset of a product space (more details in Section \ref{sec:alg}), 
one can approximate points in $\mathcal{S}$ by successive projections onto separating hyperplanes. The basic idea consists in giving a current point $p^k=(z^k,w_1^k,\dots, w_{n-1}^k)$, find an affine functional/separator, say $\varphi_k(\cdot)$, such that
$\varphi_k(p^k)>0$ and $\varphi_k(p)\leq 0$ for all $p=(z,w_1,\dots, w_{n-1})\in \mathcal{S}$, and then define the next iterate
$p^{k+1}$ as the projection of $p^k$ (which can be explicitly computed) onto the semi-space $\{p\;|\;\varphi_k(p)\leq 0\}$ containing $\mathcal{S}$. 
For example, at a current iterate $k\geq 0$, a separator $\varphi_k(\cdot)$ can be constructed by choosing in an appropriate way $(x_i^k,y_i^k)$ in the graph of $T_i$ ($1\leq i\leq n$) and letting 
\begin{align}\lab{eq:varphi_i}
\varphi_k(p) := \sumn\,\inner{G_iz - x_i^k}{y_i^k - w_i},
\end{align}
where $p = (z,w_1,\dots, w_{n-1})$ and $w_n$ is as in \eqref{eq:def.Si}.

\mgap

\noindent
{\bf Sufficient conditions for separation.} One may ask under what conditions the affine functional $\varphi_k(\cdot)$ as in \eqref{eq:varphi_i} produces good separators, i.e., under what conditions we have 
$\varphi_k(p^k)>0$ and $\varphi_k(p)\leq 0$ for all $p\in \mathcal{S}$, while ensuring the resulting projective algorithm has good convergence properties. 
Since the definition of $\varphi_k(\cdot)$ depends on the points $(x_i^k,y_i^k)$ ($1\leq i\leq n$), 
all we need to do is to provide rules for picking these points in an appropriate way.
Some of the strategies from the current literature on the subject are as follows:
\begin{enumerate}
\item Assume that the resolvent $(T_i+I)^{-1}$ of each $T_i$ ($1\leq i\leq n$) is computable and let, for $i=1,\dots, n$,
\[
x_i^k = (\rho_i^k T_i + I)^{-1}(G_i z^k+\rho_i^k w_i^k)\;\;\mbox{and}\;\; 
y_i^k = \dfrac{G_i z^k - x_i^k}{\rho_i^k} + w_i^k, 
\]
where $\rho_i^k>0$ is a proximal parameter. This appears, for instance, 
in the works~\cite{combettes2014Coupled, eck.sva-fam.mp08,eck.sva-gen.sjco09}.
\item Set, for all $i=1,\dots, n$,  $B_i$ and $D_i$ as zero in \eqref{eq:probi}, and then compute single-forward steps for the cocoercive operators $C_i$ ($1\leq i\leq n$) and backward steps for the maximal monotone operators $A_i$, i.e.,
\[
x_i^k = (\rho_i^k A_i + I)^{-1}\left(G_i z^k+\rho_i^k w_i^k - \rho_i^kC_i(G_i z^k)\right)
\;\;\mbox{and}\;\; 
y_i^k = \dfrac{G_i z^k - x_i^k}{\rho_i^k} + w_i^k.
\]
This appears in the works~\cite{combettes2022saddle, joh.eck-single.coap21}.
\item Set, for all $i=1,\dots, n$,  $C_i$ and $D_i$ as zero in \eqref{eq:probi}, and then compute forward steps for the Lipschitz operators $B_i$ ($1\leq i\leq n$), in the spirit of the seminal work of P. Tseng~\cite{tse-mod.sjco00}, and backward steps for $A_i$, i.e., 
\[
x_i^k = (\rho_i^k A_i + I)^{-1}\left(G_i z^k+\rho_i^k w_i^k - \rho_i^k B_i(G_i z^k)\right)
\;\mbox{and}\;\; 
y_i^k = \dfrac{G_iz^k-x_i^k}{\rho_i^k} + w_i^k + \left[B_i(x_i^k)-B_i(G_iz^k)\right].
\]
Note that, when compared to $y_i^k$ as in item 2 above, $y_i^k$ as in the latter displayed equation  requires two more evaluations of the Lipschitz operator $B_i$.
This approach appeared, e.g., in the works~\cite{combettes2022saddle, eckstein2022forward}.

\end{enumerate}

\mgap

\noindent
{\bf Main contributions of this work.} As was already mentioned, the main contribution of this work is the incorporation of proximal-Newton steps in the PS framework with respect to the smooth differentiable mappings 
$D_i$ ($1\leq i\leq n$) in \eqref{eq:probi}. 
More precisely, to define the separator $\varphi_k(\cdot)$, 
we will compute pairs $(x_i^k, y_i^k)$ ($1\leq i\leq n$) as described in lines \ref{ln:line515} 
and \ref{ln:line514}-\ref{ln:line513} of Algorithm \ref{alg_main}, where, in particular, we use a linearization 
$D_{i \,(G_i z^k)}$ of $D_i$ at a (current) point $G_i z^k$ ($1\leq i\leq n$); this is exactly what we refer to as a proximal-Newton step.
We also mention that a line-search procedure (see line \ref{ln:line515} of Algorithm \ref{alg_main}) may be needed to choose the proximal parameter $\rho_i^k>0$ ($1\leq i\leq n$), as described in 
Algorithm \ref{alg_proc}.
The weak convergence of our main algorithm, namely Algorithm \ref{alg_main}, will be presented in Theorem \ref{th:main}.
For more details, we also refer the reader to the various comments/remarks following Algorithm \ref{alg_main}.

Next we review some related works on projective- and operator-splitting algorithms. 
As we already mentioned, none of the variants of PS algorithm mentioned below considered differentiable operators $D_i$ in 
$T_i$ - as in \eqref{eq:probi} - and, consequently, none of them performed proximal-Newton steps as in the present work. 

\mgap

\noindent
{\bf Projective- and operator-splitting algorithms.} To the best of our knowledge, the basic idea of PS algorithms originates in~\cite{eck.sva-fam.mp08}, with $n=2$ in \eqref{eq:probi}, $G_i=I$ and $T_i$ general maximal monotone operators ($i=1,2$), i.e., for solving the two-operator monotone inclusion $0\in T_1(z)+T_2(z)$.  Subsequently, the method 
was extended in~\cite{eck.sva-gen.sjco09} for $n>2$, but still under the above mentioned assumptions on $G_i$ 
and $T_i $ ($i=1,\dots, n$).
Compositions with linear operators $G_i$ ($i=1,\dots, n$) as in \eqref{eq:probi} appeared for the first time in~\cite{combettes2014Coupled} (see also
~\cite{combettes2015BestAppr,combettes2020warped,combettes2022saddle,combettes2016Solving}).
Asynchronous and block-iterative versions of the PS framework were developed and studied for the first 
time in~\cite{combettes2018asynchronous}.
Forward and single-forward steps for Lipschitz and cocoercive operators, respectively, were introduced and 
studied in~\cite{joh.eck-single.coap21, eckstein2022forward} (see also~\cite{eckstein2019rates}).
Stochastic and inertial versions of the PS algorithm 
were also studied in~\cite{johnstone-stochastics.preprint21} and ~\cite{alv.ger.mar-ps}, respectively.

The literature on operator-splitting algorithms for monotone inclusions is huge. The modern treatise~\cite{bau.com-book} is quite helpful in presenting, among other results on convex analysis and monotone operator theory, the basic theory of some of the most important operator-splitting algorithms like the celebrated 
Douglas-Rachford~\cite{eck.ber-dou.mp92, lio.mer-spl.sjna79} and Forward-Backward~\cite{lio.mer-spl.sjna79, pas-erg.jmaa79} splitting algorithms.
In the last decades, with a growing interest in applications in inverse problems and machine learning, many other variants of operator splitting algorithms have emerged. We mention the Davis–Yin splitting~\cite{davis2017three}, new variants of the alternating direction method of multipliers (ADMM) 
algorithm (see, e.g., \cite{boy.par.chu-dis.ftml11, he.convergence2012, mon.sva-blo.siam13}) and the hybrid proximal-extragradient (HPE) method and its many 
variants (see, e.g., \cite{alv-nesterov, alv.mar-ine.svva2020, att.alv.sva-dyn.jca16, bach-preprint21, jordan-controlpreprint20, mon.sva-hpe.siam10, mon.sva-newton.siam12} and references there in). 
Many other researchers and research groups have contributed to the subject in different directions, as for instance~\cite{
att-convergence.mp2020,
att-inertial.jota2018,
attouch.peypouquet-convergence.mp2019,
bauschke2020behaviour,
bec.teb-fas.sjis09,
bot.acm2019,condat2013pd,
giselson2021nonlinear,
glowinski2016splitting,
sun.equivalence2021,
hager.coap2020,
mahey2017survey,
lorenz2015inertial,
lorenz.nonstationay2019,
salzo2012inexact,
salzo2022parallel}.
The monographs~\cite{boy.par.chu-dis.ftml11, ryu.win-book} can also be quite useful to many of the
above mentioned algorithms and results. 

\mgap

\noindent
{\bf Organization of the paper.} 
The material is organized as follows.
In Section \ref{sec:pre}, we present some basic results. 
In Section \ref{sec:alg}, we present our main algorithm, namely Algorithm \ref{alg_main}. 
In Section \ref{sec:conv}, Theorem \ref{th:main}, we study the weak convergence of Algorithm \ref{alg_main}.  
Section \ref{sec:bbp} is devoted to present and study the bracketing/bisection procedure called during the execution of Algorithm \ref{alg_main}; see Algorithm \ref{alg_proc}.

\mgap

\noindent
{\bf General notation.}
Let $\HH$ be a real Hilbert space.
The weak convergence of a sequence $(z^k)$ in $\HH$ to $z\in \HH$ will be denoted by $z^k\wto z$.
A set-valued map $T:\HH\tos \HH$ is said to be a \emph{monotone operator} if
$\inner{z-z'}{v-v'}\geq 0$ for all $v\in T(z)$ and $v'\in T(z')$. On the other hand, $T:\HH\tos \HH$ is
\emph{maximal monotone} if $T$ is monotone and its \emph{graph}
$\mbox{Gr}(T):=\{(z,v)\in \HH\times \HH\,|\,v\in T(z)\}$ is not properly contained in the graph of any other
monotone operator on $\HH$. 
The \emph{domain} of $T:\HH\tos \HH$ is $\mbox{Dom}(T):=\{z\in \HH\;|\;T(z)\neq \emptyset\}$.
The \emph{inverse} of $T:\HH\tos \HH$ is $T^{-1}:\HH\tos \HH$, defined at any
$z\in \HH$ by $v\in T^{-1}(z)$ if and only if $z\in T(v)$. 
The \emph{resolvent} of a maximal monotone operator
$T:\HH\tos \HH$ is $J_T:= (T+I)^{-1}$; note that $z=(T+I)^{-1}x$ if and only if $x-z\in T(z)$. 
The operator $\gamma T:\HH\tos \HH$, where $\gamma>0$, is defined by $(\gamma T)z:=\gamma T(z):=\{\gamma v\,|\,v\in T(z)\}$.
The sum of $T:\HH\tos \HH$ and $S:\HH\tos \HH$ is defined via the usual Minkowski sum 
$T(z)+S(z):=\{u+v\;|\;u\in T(z)\;\;\mbox{and}\;\;v\in S(z)\}$.

The \emph{adjoint} of a bounded linear operator $G:\HH\to \HH$ is denoted by $G^*$. A (point-to-point) operator $C:\HH\to \HH$ is said to be $\beta$-\emph{coercive} if $\beta\geq 0$ and $\inner{z-z'}{C(z)-C(z')}\geq \beta\normq{C(z)-C(z')}$ for all
$z,z'\in \HH$.
On the other hand, a (point-to-point) operator $B:\HH\to \HH$ is said to be $\ell$-Lipschitz continuous if $\ell\geq 0$
and $\norm{B(z)-B(z')}\leq \ell\norm{z-z'}$ for all $z,z'\in \HH$.
The \emph{subdifferential} of a convex function $f:\HH\to (-\infty, \infty]$ is $\partial f:\HH\tos \HH$, defined at
any $z\in \HH$ as $\partial f(z):=\{u\in \HH\;|\;f(z')\geq f(z)+\inner{u}{z'-z}\;\;\;\forall z'\in \HH\}$. For additional details on 
standard notation on convex analysis and monotone operators we refer the reader to the 
books~\cite{bau.com-book,rock-ca.book}.

The derivative of a continuously differentiable mapping $D:\HH\to \HH$ will be denoted by $D'(\cdot)$. The norm of 
$D'(z)$ is $\norm{D'(z)}:=\sup_{\norm{x}\leq 1}\,\norm{D'(z)x}$. We will also frequently use the notation
\begin{align} \label{eq:lin.app}
 D_{(u)}(z):=D(u)+D'(u)(z-u)\qquad \forall z,u\in \HH.
\end{align}

Now let $\HH_0, \HH_1,\ldots, \HH_{n-1}$ be real Hilbert spaces, let $\HH_n:=\HH_0$ and let $\inner{\cdot}{\cdot}$ and
 $\|\cdot\|=\sqrt{\inner{\cdot}{\cdot}}$ denote the inner product and norm (respectively) in $\HH_i$ ($i=0,\dots, n$).
 Let $\HHH:=\HH_0\times \HH_1\times \dots\times \HH_{n-1}$ be endowed with the inner product and norm
defined, respectively, as follows (for some $\gamma >0$):
\begin{align}
  \label{eq:def.inner}
\inner{p}{\tilde p}_{\gamma}=\gamma\inner{z}{\tilde z}+\sum_{i=1}^{n-1}\inner{w_i}{\tilde w_i},\quad
\norm{p}_\gamma = \sqrt{\inner{p}{p}_\gamma},
\end{align}
where $p=(z, w)$, $\tilde p = (\tilde z, \tilde w)$, with $z,\tilde z\in \HH_0$ and $w:=(w_1,\dots, w_{n-1}), \tilde w:=(\tilde w_1,\dots, \tilde w_{n-1})\in \HH_1\times \ldots\times \HH_{n-1}$.
Elements in $\HHH$ will be denoted either by
$p=(z, w_1,\dots, w_{n-1})$ or simply $p=(z,w)$, where $w=(w_1,\dots, w_{n-1})$, and we will use the notation
 \begin{align}
 \label{eq:def.wn}
 w_n = -\sum_{i=1}^{n-1}\,G_i^* w_i
\end{align}
whenever $p\in \HHH$ and $G_i$ is bounded linear ($i=1,\dots, n-1$).

We will also adopt the following conventions: $0\cdot\infty=0$, $1/ \infty=0$, $1/0=\infty$
 and $r\cdot \infty=\infty$ for all $r\in (0,\infty)$.
By $\R_{++}$ we denote the set of strictly positive real numbers.

\section{Basic results}
 \label{sec:pre}

In this section, we present a general separator-projection framework (Algorithm \ref{alg_gen}) for 
finding a point in a nonempty closed and convex subset of a Hilbert space, along with other basic results we will need in this paper. 
The main reason to consider Algorithm \ref{alg_gen} here comes from the 
fact (as previously discussed in Section \ref{sec:int}) that the monotone inclusion 
problem \eqref{eq:probi} can be reformulated as the problem of finding a point in the extended solution set $\mathcal{S}$ as in \eqref{eq:def.Si}.  
Algorithm \ref{alg_gen} will be used in Section \ref{sec:conv} to analyze the convergence of the main algorithm proposed in 
this paper, namely Algorithm \ref{alg_main} below.

Throughout this section, $\HH$ and $\mathcal{K}$ denote real Hilbert spaces with inner product $\inner{\cdot}{\cdot}$
and norm $\|\cdot\|=\sqrt{\inner{\cdot}{\cdot}}$. 
We denote the gradient of an affine function $\varphi:\HH\to \R$ by the standard notation $\nabla \varphi$ 
and we also write $\varphi(z)=\inner{\nabla \varphi}{z}+\varphi(0)$ for all $z\in \HH$.

\mgap

\begin{algorithm}[H]
 \caption{Generic linear separation-projection method for finding a point in a nonempty closed and convex set 
$\mathcal{S}\subset \HH$}
\label{alg_gen}
\SetAlgoLined
\KwInput{$p^0\in \HH$, $0<\underline{\tau}<\overline{\tau}<2$ }
 \For{$k=0,1,2,\dots$}{
  Find an affine function $\varphi_k$ in $\HH$ such that 
  $\mathcal{S}\subset \{p\in \HH\;|\;\varphi_k(p)\leq 0\}$\\
  \eIf{$\varphi_k(p^k)>0$}
      {Choose $\tau_k\in [\underline{\tau},\overline{\tau}]$\\[2mm]
      $\alpha_k = \tau_k\dfrac{\varphi_k(p^{k})}{\norm{\nabla \varphi_k}^2}$\\[3mm]
     $p^{k+1}= p^{k} - \alpha_k\nabla \varphi_k$
     }
     {$p^{k+1}= p^{k}$}
  }
\end{algorithm}

\mgap

Next lemma is about the weak convergence of Algorithm \ref{alg_gen}.

\begin{lemma}\emph{(\cite{combettes2014Coupled})}
 \label{lm:alg_gen}
  The following holds for any sequence $(p^k)$ generated by \emph{Algorithm \ref{alg_gen}}.
   \begin{itemize}
    \item[\emph{(a)}] $(p^k)$ is bounded.
    \item[\emph{(b)}] If all weak cluster points of $(p^k)$ are in $\mathcal{S}$, then $(p^k)$ converges weakly to some
    point in $\mathcal{S}$.
  \item[\emph{(c)}] If there exists $\xi>0$ such that $\norm{\nabla \varphi_k}\leq \xi$, for all $k\geq 0$,  then 
 $\overline{\lim} \,\varphi_k(p^k)\leq 0$.
\end{itemize}
\end{lemma}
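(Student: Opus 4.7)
The plan is to establish Fej\'er monotonicity of $(p^k)$ with respect to $\mathcal{S}$ and then exploit the classical consequences. Fix $p^*\in \mathcal{S}$, so $\varphi_k(p^*)\leq 0$ by construction. The only iterations that modify $p^k$ are those with $\varphi_k(p^k)>0$; on such iterations, using $p^{k+1}=p^k-\alpha_k\nabla\varphi_k$ and $\langle \nabla\varphi_k,p^k-p^*\rangle=\varphi_k(p^k)-\varphi_k(p^*)\geq \varphi_k(p^k)$, a direct expansion gives
\begin{align*}
\norm{p^{k+1}-p^*}^2
&\leq \norm{p^k-p^*}^2 - 2\alpha_k\varphi_k(p^k)+\alpha_k^2\norm{\nabla\varphi_k}^2\\
&= \norm{p^k-p^*}^2 - \tau_k(2-\tau_k)\,\frac{\varphi_k(p^k)^2}{\norm{\nabla \varphi_k}^2}.
\end{align*}
Since $\tau_k(2-\tau_k)\geq \underline{\tau}(2-\overline{\tau})>0$, and since the inequality is trivial on iterations where $p^{k+1}=p^k$, I get a Fej\'er-monotonicity estimate valid for every $k$ and every $p^*\in\mathcal{S}$.

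From this estimate, part (a) is immediate: $\norm{p^k-p^*}$ is non-increasing, hence $(p^k)$ is bounded. Summing the estimate in $k$ also yields
\begin{equation*}
\sum_{k\geq 0}\frac{\varphi_k(p^k)_+^2}{\norm{\nabla \varphi_k}^2}<\infty,
\end{equation*}
where $(\,\cdot\,)_+$ denotes the positive part (on iterations with $\varphi_k(p^k)\leq 0$ the summand is zero). Part (c) then follows at once: if $\norm{\nabla\varphi_k}\leq\xi$, the summability forces $\varphi_k(p^k)_+\to 0$, and since $\varphi_k(p^k)\leq \varphi_k(p^k)_+$ always, I conclude $\overline{\lim}\,\varphi_k(p^k)\leq 0$.

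For part (b), I would invoke the standard Opial-type lemma for Fej\'er monotone sequences: a Fej\'er monotone sequence in a Hilbert space converges weakly to a point of $\mathcal{S}$ provided every weak cluster point belongs to $\mathcal{S}$. Concretely, pick two weak cluster points $p^\infty,\tilde p^\infty\in \mathcal{S}$ of $(p^k)$ along subsequences $(p^{k_j})$ and $(p^{\tilde k_j})$; Fej\'er monotonicity ensures that both $\norm{p^k-p^\infty}$ and $\norm{p^k-\tilde p^\infty}$ converge, and expanding $\norm{p^k-p^\infty}^2-\norm{p^k-\tilde p^\infty}^2=2\langle \tilde p^\infty-p^\infty, p^k\rangle+\text{const.}$ and passing to the limit along the two subsequences forces $p^\infty=\tilde p^\infty$, i.e., the weak cluster point is unique.

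The main (minor) obstacle is getting the Fej\'er estimate to hold uniformly across both cases $\varphi_k(p^k)>0$ and $\varphi_k(p^k)\leq 0$; the clean way is to write it with $\varphi_k(p^k)_+$ so that the null-iteration case is absorbed automatically. The rest is bookkeeping and an appeal to the classical weak-convergence principle for Fej\'er monotone sequences.
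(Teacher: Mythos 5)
Your proof is correct. The paper does not prove this lemma itself but cites \cite{combettes2014Coupled}, and your argument --- quasi-Fej\'er/Fej\'er monotonicity of $(p^k)$ with respect to $\mathcal{S}$ obtained from the relaxed-projection update, summability of $\varphi_k(p^k)_+^2/\norm{\nabla\varphi_k}^2$, and the standard Opial-type uniqueness argument for weak cluster points --- is exactly the classical route used there, so it matches the intended proof in all essentials. (The only point worth making explicit is that $\nabla\varphi_k\neq 0$ whenever $\varphi_k(p^k)>0$, which holds because $\mathcal{S}$ is nonempty and contained in $\{\varphi_k\leq 0\}$, so $\alpha_k$ is well defined.)
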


\mgap

Next lemma will be useful in the convergence analysis of Algorithm \ref{alg_main}.

\begin{lemma}\emph{(\cite[Lemma 3.1]{com-pri.ol14})} \lab{lm:pro_nuc}
Let $G:\HH\to \mathcal{K}$ be a bounded linear operator let $V$ be the graph of $G$, i.e., 
$V:=\{(z,Gz)\in \HH\times \mathcal{K}\;|\; z\in \HH\}$.
Let $P$ and $P^\perp$ denote the orthogonal projections onto $V$ and $V^\perp$ (the orthogonal complement of $V$), respectively. Then, for all $(z,w)\in \HH\times \mathcal{K}$, we have
\begin{align*}
& P(z, w) = ((I+G^*G)^{-1}(z+G^*w), G(I+G^*G)^{-1}(z+G^*w)),\\
& P^\perp(z,w) = ( G^*(I+GG^*)^{-1}(Gz-w), -(I+GG^*)^{-1}(Gz-w)).
\end{align*}
\end{lemma}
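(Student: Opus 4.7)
The plan is to prove the two formulas separately, starting with $P$ and then deriving $P^\perp$ either by subtraction from the identity or directly by a symmetric computation. The first observation is that $V=\{(u,Gu) : u\in \HH\}$ is a closed linear subspace of $\HH\times \mathcal{K}$ (closed because $G$ is bounded, hence continuous), so the orthogonal projection $P$ is well-defined and characterized by the normal-equation: for any $(z,w)\in \HH\times\mathcal{K}$, the point $P(z,w)=(u^*,Gu^*)$ is the unique minimizer of the strictly convex quadratic
\[
u \;\longmapsto\; \|z-u\|^2 + \|w-Gu\|^2.
\]

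First I would differentiate with respect to $u$ and set the gradient to zero, obtaining $-(z-u)-G^*(w-Gu)=0$, i.e.\ $(I+G^*G)u = z+G^*w$. A short aside verifies that $I+G^*G$ is invertible on $\HH$: it is self-adjoint and bounded below by the identity since $\langle u,(I+G^*G)u\rangle = \|u\|^2+\|Gu\|^2\geq \|u\|^2$, so by the Lax–Milgram / coercive bounded linear operator argument it has a bounded inverse. Consequently $u^* = (I+G^*G)^{-1}(z+G^*w)$, which immediately yields the claimed formula for $P(z,w)$.

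For $P^\perp(z,w)$, I would use $P^\perp = I-P$ and rewrite both components. The first component is $z-u^*$, and from the normal equation $u^* - z = G^*w - G^*Gu^* = -G^*(Gu^*-w)$, so $z - u^* = G^*(Gu^* - w)$. The second component is $w - Gu^* = -(Gu^*-w)$. The key algebraic step is then to recast $Gu^*-w$ in terms of $(I+GG^*)^{-1}$: applying $G$ to $(I+G^*G)u^* = z + G^*w$ and using the commutation identity $G(I+G^*G) = G + GG^*G = (I+GG^*)G$, I get $(I+GG^*)Gu^* = Gz + GG^*w$, and subtracting $(I+GG^*)w$ from both sides gives $(I+GG^*)(Gu^*-w) = Gz-w$. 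Since $I+GG^*$ is invertible on $\mathcal{K}$ by the same coercivity argument as above, I conclude $Gu^* - w = (I+GG^*)^{-1}(Gz-w)$, which delivers both components of the claimed formula for $P^\perp$.

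The proof is essentially computational and the main (mild) obstacle is the commutation identity $G(I+G^*G)^{-1} = (I+GG^*)^{-1}G$ needed to pass from the $P$-formula (which naturally involves $(I+G^*G)^{-1}$) to the $P^\perp$-formula (which is stated in terms of $(I+GG^*)^{-1}$); handling it at the level of $G(I+G^*G)=(I+GG^*)G$ before inverting avoids any subtle domain issues. Everything else amounts to the standard first-order characterization of projection onto a closed subspace and the invertibility of $I+G^*G$ and $I+GG^*$.
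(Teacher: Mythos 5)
Your proof is correct and complete: the normal equation $(I+G^*G)u^*=z+G^*w$ obtained from the first-order (equivalently, orthogonality) characterization of the projection, the coercivity argument for invertibility of $I+G^*G$ and $I+GG^*$, and the commutation identity $G(I+G^*G)=(I+GG^*)G$ used before inverting together give exactly the stated formulas for $P$ and $P^\perp=I-P$. Note that the paper itself supplies no proof of this lemma, quoting it directly from \cite[Lemma~3.1]{com-pri.ol14}, so there is no in-paper argument to compare against; your derivation is the standard one and would serve as a self-contained proof.
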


\mgap

A proof of the lemma below in finite-dimensional spaces can be found in \cite[Lemma 4.1.12]{den.sch-numbook.siam96}. 
A proof in general Hilbert spaces follows the same outline as in the finite-dimensional case.

\mgap

\begin{lemma}
 \label{lm:lip}
 Let $D:\HH\to \HH$ be a continuously differentiable mapping
  with an $m$-Lipschitz continuous derivative $D'(\cdot)$, i.e., let $D$ be such that there exits $m\geq 0$ satisfying
\begin{align}
 \label{eq:lip.app}
 \norm{D'(x)-D'(y)}\leq m\norm{x-y}\qquad  \forall x,y \in \HH.
\end{align}
Then,
  \[
   \norm{D(z) - D_{(u)}(z)}\leq \dfrac{m}{2}\norm{z-u}^2 \qquad \forall z,u\in \HH,
  \]
  where $D_{(u)}(z)$ is as in \eqref{eq:lin.app}.
\end{lemma}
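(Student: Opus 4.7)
The plan is to mimic the classical descent-lemma proof, replacing the scalar fundamental theorem of calculus by its Bochner/Hilbert-space version. Concretely, I would fix $z,u\in\HH$ and consider the auxiliary map $\psi:[0,1]\to\HH$ defined by $\psi(t):=D(u+t(z-u))$. Since $D$ is continuously differentiable, $\psi$ is continuously differentiable with $\psi'(t)=D'(u+t(z-u))(z-u)$, so the Hilbert-space fundamental theorem of calculus gives
\begin{equation*}
D(z)-D(u) \;=\; \psi(1)-\psi(0) \;=\; \int_{0}^{1} D'(u+t(z-u))(z-u)\,dt .
\end{equation*}

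Next I would subtract the linear part $D'(u)(z-u)$ from both sides. Using that $\int_0^1 D'(u)(z-u)\,dt = D'(u)(z-u)$, this yields
\begin{equation*}
D(z) - D_{(u)}(z) \;=\; \int_{0}^{1} \bigl[D'(u+t(z-u)) - D'(u)\bigr](z-u)\,dt .
\end{equation*}

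Then I would take norms, pull the norm inside the Bochner integral, and apply the operator-norm bound $\|[D'(u+t(z-u))-D'(u)](z-u)\|\le \|D'(u+t(z-u))-D'(u)\|\cdot\|z-u\|$. The Lipschitz hypothesis \eqref{eq:lip.app} gives $\|D'(u+t(z-u))-D'(u)\|\le m\,t\,\|z-u\|$, so
\begin{equation*}
\|D(z)-D_{(u)}(z)\| \;\le\; \int_0^1 m\,t\,\|z-u\|^2\,dt \;=\; \frac{m}{2}\|z-u\|^2,
\end{equation*}
which is the desired inequality.

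The proof is entirely routine; the only mild subtlety is invoking the fundamental theorem of calculus and the triangle inequality for vector-valued integrals in the Hilbert setting, but since $t\mapsto D'(u+t(z-u))(z-u)$ is continuous on $[0,1]$ with values in $\HH$, the Bochner integral exists and behaves exactly as in the finite-dimensional case treated in \cite{den.sch-numbook.siam96}. No deeper result is needed, so there is no real obstacle beyond writing these steps carefully.
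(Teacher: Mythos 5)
Your proof is correct and is exactly the standard fundamental-theorem-of-calculus argument that the paper itself defers to, citing \cite[Lemma 4.1.12]{den.sch-numbook.siam96} and noting that the Hilbert-space case follows the same outline. No discrepancy to report.
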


\mgap

\begin{lemma}
\label{lm:bound}
Let $T:\HH\tos \HH$ be a maximal monotone operator.
Let $(s^k)$ and $(u^k)$ be bounded sequences in $\HH$, let $(\rho^k)$ be a bounded sequence
in $\R_{++}$ and let $D_{(u^k)}(\cdot)$ be as in \eqref{eq:lin.app} with $u=u^k$. 
Then the sequence $(x^k)$ defined by
\begin{align}
  \label{eq:galeao} 
 x^k=J_{\rho^k\left(T+D_{(u^k)}\right)}(s^k)\qquad \forall k\geq 0
\end{align}
is bounded.
\end{lemma}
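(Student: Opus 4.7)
The plan is to extract the ``good'' inclusion hidden in the resolvent equation \eqref{eq:galeao}, fix a reference pair $(\hat x,\hat v)\in \Gra(T)$, and exploit the monotonicity of $T$ \emph{together with} the affine monotonicity of $D_{(u^k)}$ in order to obtain a linear (rather than quadratic) bound on $\|x^k-\hat x\|$ whose right-hand side is controlled by the boundedness hypotheses.

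Concretely, I will first rewrite \eqref{eq:galeao} in the equivalent form
\[
  \frac{s^k - x^k}{\rho^k} - D_{(u^k)}(x^k) \in T(x^k).
\]
Picking any $(\hat x,\hat v)\in \Gra(T)$ (nonempty by maximality of $T$) and applying the monotonicity of $T$ gives
\[
  \Inner{x^k - \hat x}{\frac{s^k - x^k}{\rho^k} - D_{(u^k)}(x^k) - \hat v}\geq 0.
\]
After multiplying through by $\rho^k>0$ and using $\inner{x^k-\hat x}{s^k-x^k}=\inner{x^k-\hat x}{s^k-\hat x}-\|x^k-\hat x\|^2$, this rearranges to
\[
  \|x^k - \hat x\|^2 \leq \Inner{x^k - \hat x}{s^k - \hat x} - \rho^k\Inner{x^k - \hat x}{D_{(u^k)}(x^k) + \hat v}.
\]

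The crucial step is then to replace $D_{(u^k)}(x^k)$ by $D_{(u^k)}(\hat x)$ on the right-hand side. Since $D$ is monotone and continuously differentiable (the standing assumption on each $D_i$ in the paper), the derivative $D'(u^k)$ is a positive operator, so the affine map $D_{(u^k)}$ is itself monotone; combined with the identity $D_{(u^k)}(x^k)-D_{(u^k)}(\hat x)=D'(u^k)(x^k-\hat x)$, this yields $\inner{x^k-\hat x}{D_{(u^k)}(x^k)}\geq \inner{x^k-\hat x}{D_{(u^k)}(\hat x)}$. Substituting and applying Cauchy--Schwarz, I then expect
\[
  \|x^k - \hat x\| \leq \|s^k - \hat x\| + \rho^k\bigl(\|D_{(u^k)}(\hat x)\| + \|\hat v\|\bigr).
\]
Since $(u^k)$ is bounded and $D,D'$ are continuous, $\sup_k\|D(u^k)\|$ and $\sup_k\|D'(u^k)\|$ are finite, so $\sup_k\|D_{(u^k)}(\hat x)\|<\infty$; together with the boundedness of $(s^k)$ and $(\rho^k)$, this furnishes a uniform bound on $\|x^k-\hat x\|$, proving the claim.

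The main obstacle is that the naive estimate $\|D_{(u^k)}(x^k)\|\leq\|D(u^k)\|+\|D'(u^k)\|\|x^k-u^k\|$ is linear in $\|x^k\|$ and, if fed directly into the above inequality, would only produce an implicit bound of the form $(1-\bar\rho M)\|x^k\|\leq \text{const}$, requiring the smallness $\bar\rho M<1$ which is \emph{not} assumed. The correct maneuver is to exploit the monotonicity of $D_{(u^k)}$ itself---the very property that makes $T+D_{(u^k)}$ maximal monotone and ensures the resolvent in \eqref{eq:galeao} is well-defined---so as to transfer the dependence on $x^k$ into the fixed $D_{(u^k)}(\hat x)$ term and thereby eliminate any smallness requirement on $\rho^k$.
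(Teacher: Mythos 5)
Your argument is correct and is essentially the paper's proof: the paper fixes $w\in\Dom(T)$, observes that $w=J_{\rho^k(T+D_{(u^k)})}(\rho^k r^k+w)$ for the bounded sequence $r^k=a+D_{(u^k)}(w)$ with $a\in T(w)$, and concludes $\|x^k-w\|\le\|s^k-w\|+\rho^k\|r^k\|$ by nonexpansiveness of the resolvent --- exactly the estimate you reach by inlining the standard monotonicity-plus-Cauchy--Schwarz derivation of that nonexpansiveness (your appeal to the monotonicity of $D_{(u^k)}$ is precisely what makes $T+D_{(u^k)}$ maximal monotone and its resolvent nonexpansive). One minor point: in infinite dimensions mere continuity of $D$ and $D'$ does not bound them along bounded sequences, so, as the paper does, you should invoke the Lipschitz continuity of $D'$ (Assumption (A5) via Lemma \ref{lm:lip}) to justify $\sup_k\|D_{(u^k)}(\hat x)\|<\infty$.
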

\begin{proof}
  Let $w\in \mbox{Dom}(T)$ and, for each $k\geq 0$, choose $r^k\in \HH$ such that $r^k\in (T+D_{(u^k)})(w)$, i.e.,
  take $a\in T(w)$ and let, for each $k\geq 0$, $r^k=a+D_{(u^k)}(w)$. 
Since $(u^k)$ is bounded, it follows from Lemma \ref{lm:lip} that $(r^k)$ is also bounded.
Note now that the inclusion $r^k\in (T+D_{(u^k)})(w)$ is clearly equivalent
to $w=J_{\rho^k\left(T+D_{(u^k)}\right)}(\rho^k r^k + w)$, which combined with \eqref{eq:galeao} and the nonexpansiveness
of $J_{\rho^k (T+D_{u^k})}(\cdot)$ yields
\[
 \norm{x^k-w}\leq \norm{s^k - \rho^k r^k -w}\leq \norm{s^k-w}+\rho^k\norm{r^k}.
\]
The desired result now follows from the boundedness of $(s^k)$, $(\rho^k)$ and $(r^k)$.
\end{proof}

\section{The problem and the main algorithm}
 \label{sec:alg}
 
 In this section, we present our main algorithm, namely Algorithm \ref{alg_main}, 
 for solving the multi-term monotone inclusion problem \eqref{eq:probi}. 
 In some cases, Algorithm \ref{alg_main} may need a bracketing/bisection procedure, which we discuss in Section \ref{sec:bbp} (see Algorithm \ref{alg_proc}). 
 The main result on the weak convergence of Algorithm \ref{alg_main} will be presented in 
 Section \ref{sec:conv} (Theorem \ref{th:main}). We also refer the reader to the comments and remarks following Algorithms \ref{alg_main} and \ref{alg_proc}.
 
Consider the monotone inclusion problem \eqref{eq:probi}, i.e., the problem of finding
$z\in \HH_0$ such that
\begin{align}
  \label{eq:prob}
    0\in \sum_{i=1}^n\,G_i^*(\underbrace{A_i+B_i+C_i + D_i}_{T_i}) G_i (z),
\end{align}
where the following assumptions are assumed to hold for all $i=1,\dots, n$:
\begin{itemize}
\item [(A1)] $G_i:\HH_0\to \HH_i$ is a bounded linear operator and $G_n= I$ (the identity operator).
\item [(A2)] $A_i:\HH_i\tos \HH_i$ is a (set-valued) maximal monotone operator.
\item [(A3)] $B_i:\HH_i\to \HH_i$ is a (point-to-point) monotone and $\ell_i$-Lipschitz continuous operator, i.e., there
 exists $\ell_i\in [0,\infty)$ such that
     \begin{align}
        \label{eq:prop.coco}
          \norm{B_i(x)-B_i(y)}\leq \ell_i \norm{x-y}\qquad \forall x,y\in \HH_i.
     \end{align}
\item [(A4)] $C_i:\HH_i\to \HH_i$ is a (point-to-point) $\beta_i$-cocoercive operator, i.e., there exists 
$\beta_i\in (0,\infty]$ such that
     \begin{align}
        \label{eq:prop.cocox}
          \inner{x-y}{C_i(x)-C_i(y)}\geq \beta_i\norm{C_i(x)-C_i(y)}^2\qquad \forall x,y\in \HH_i.
     \end{align}
\item [(A5)] $D_i:\HH_i\to \HH_i$ is a (point-to-point) continuously differentiable monotone operator with an
$m_i$-Lipschitz continuous derivative $D'_i(\cdot)$, i.e., there exists $m_i\in [0,\infty)$ such that
     \begin{align}
        \label{eq:prop.smooth}
          \norm{D'_i(x)-D'_i(y)}\leq m_i \norm{x-y}\qquad \forall x,y\in \HH_i.
     \end{align}
\item [(A6)] The solution set of \eqref{eq:prob} is nonempty.
\end{itemize}

We also assume the following regarding the family of operators $D_i(\cdot)$ ($i=1,\dots, n$):
\begin{itemize}
\item [(A7)] 
There exists a subset of indexes $\mathcal{I}_{\mathcal{D}}\subset \{1,\dots, n\}$ such that
$m_i>0$ whenever $i\in \mathcal{I}_{\mathcal{D}}$, and 
$m_i=0$ and $D_i=0$ for all $i\in \{1,\dots, n\}\backslash \mathcal{I}_{\mathcal{D}}$ ($m_i=0$ in \eqref{eq:prop.smooth} implies that $D_i$ is affine and, as a consequence, it can be incorporated to either $B_i$ or $C_i$ and so ignored in
\eqref{eq:prob}, i.e., set to be zero).
\end{itemize}

An important instance (under appropriate qualification conditions) of \eqref{eq:prob} 
is the minimization problem
\begin{align}
 \label{eq:prob_min}
 \min_{z\in \HH_0}\,\sum_{i=1}^n\,\Big\{f_i(G_i z) + g_i(G_iz) + h_i(G_iz)\Big\},
\end{align}
where, for $i=1,\dots, n$, $f_i:\HH_i\to (-\infty,\infty]$ is proper, convex and lower semicontinuous, 
$g_i:\HH_i\to \R$ is convex differentiable with a 
$\beta^{-1}_i$-Lipschitz continuous gradient $\nabla g_i$ and 
$h_i:\HH\to \R$ is convex and twice-continuously differentiable with an $m_i$-Lipschitz continuous second derivative $\nabla^2 h_i$,
in which case we have, in the general setting of \eqref{eq:prob}:
\begin{align}
  \label{eq:op_fun}
  A_i = \partial f_i,\quad B_i= 0,\quad C_i=\nabla g_i\;\;\mbox{and}\;\; D_i=\nabla h_i.
\end{align}

We mention that several problems of interest in Machine Learning, including \emph{logistic regression problems}, are special instances of \eqref{eq:prob_min} -- see, e.g., \cite{Bach2010, FereyBachRudi2019}.

Our main Algorithm (see Algorithm \ref{alg_main} below) for solving \eqref{eq:prob} will, 
in particular, be conformed to the following set of rules (for $i=1,\dots n$):
\begin{enumerate}
\item Evaluation of each linear operator $G_i$.
\item Explicit evaluation of the Lipschitz and cocoercive components $B_i$ and $C_i$, respectively.
\item Computation and evaluation of the first-order approximation of $D_i$:
\begin{align}
  \label{eq:def.fo}
  D_{i\;(u)}(z):=D_i(u)+D_i'(u)(z-u)\qquad (z,u\in \HH_i).
\end{align}
\item Implicit computation of the resolvent $J_{\rho(A_i+D_{i\;(u)})}:=\left(\rho(A_i+D_{i\;(u)})+I\right)^{-1}$ of the operator $A_i+D_{i\;(u)}$, where $\rho>0$ 
(see the third remark following Algorithm \ref{alg_main} for additional details).
\end{enumerate} 

Recall that we assumed the solution set of \eqref{eq:prob} to be nonempty, i.e., we assumed that there exists at least one $z\in \HH_0$ satisfying the inclusion in \eqref{eq:prob}. 
We also note that, under assumptions (A2)--(A5) above, $T_i$ as defined in \eqref{eq:prob} is maximal monotone
for all $i=1,\dots, n$.  

Before presenting our algorithm, recall the \emph{extended-solution set} as previously presented in \eqref{eq:def.Si}:
\begin{align}
  \label{eq:def.S}
    \mathcal{S}:=\left\{p\in \HHH \mid w_i\in 
(\underbrace{A_i+B_i+C_i+D_i}_{T_i})(G_iz),\; i=1,\dots,n,\; 
     w_n:=-\sum_{i=1}^{n-1}\,G_i^*w_i\right\},
 \end{align}
where by $p$ we denote $(z,w_1,\dots, w_{n-1})$. One can prove that $\mathcal{S}$ is closed and convex 
(see, e.g.,~\cite{combettes2014Coupled}).

\mgap
\mgap

Next is our main algorithm.

%
\begin{algorithm}[H]
 \caption{Projective splitting with backward, half-forward and proximal-Newton steps for solving problem \eqref{eq:prob}}
\label{alg_main}
\SetAlgoLined
\KwInput{$(z^0, w_1^0,\dots, w_{n-1}^0)\in \HHH$, $0<\underline{\tau}<\overline{\tau}<2$, 
$0<\underline{\theta}<\overline{\theta}<2$, $\hat \rho>0$, $\hat \delta>0$, $\gamma>0$ and $w_n^0:=-\sum_{i=1}^{n-1}\,G_i^*w_i^0$}
 \For{$k=0,1,2,\dots$}{
 \For{$i=1,\dots, n$}{
   \eIf{$w_i^k\in (A_i+B_i+C_i+D_i)(G_i z^k)$}
        {$\rho_i^k=\hat \rho$, $x_i^k=G_i z^k$ and $y_i^k = w_i^k$\\ \label{ln:line600}}
        {
        \eIf
        {$i\in \mathcal{I}_\mathcal{D}$}
        {Let $(\rho_i^k, x_i^k, y_i^k)\gets 
            \FMain{$A_i,B_i,C_i,D_i,G_i,z^k,w_i^k,\ell_i,\beta_i, m_i$}$, i.e.,\\[3mm]\label{ln:line515}
            $x_i^k = J_{\rho_i^k \left(A_i+D_{i \,(G_i z^k)}\right)} \left(G_iz^k + \rho_i^k w_i^k - \rho_i^k  
            (B_i+C_i)(G_i z^k)\right)$\\[3mm]\label{ln:line508}
           $y_i^k = \dfrac{G_iz^k-x_i^k}{\rho_i^k} + w_i^k + \left[B_i(x_i^k)-B_i(G_iz^k)\right] 
              + \left[D_i(x_i^k) - D_{i \,(G_i z^k)}(x_i^k)\right]$\\[3mm] \label{ln:line509}
            $\underline{\theta}\leq 4\,\ell_i^2 (\rho_i^k)^2 + \left(\beta_i^{-1}+\hat \delta\right)\rho_i^k +
            \left(m_i \rho_i^k \norm{x_i^k - G_i z^k}\right)^2\leq \overline{\theta}$\\ \label{ln:line510}
           }
          {Choose $\rho_i^k>0$\\[3mm]\label{ln:line514}
            $x_i^k = J_{\rho_i^k A_i} \left(G_iz^k + \rho_i^k w_i^k - \rho_i^k  
            (B_i+C_i)(G_i z^k)\right)$\\[3mm]\label{ln:line512}
           $y_i^k = \dfrac{G_iz^k-x_i^k}{\rho_i^k} + w_i^k + \left[B_i(x_i^k)-B_i(G_iz^k)\right]$\\ \label{ln:line513}
        }
        }
        $u_i^k = x_i^k - G_i x_n^k$\\ \label{ln:line501}
        }
$v^k  = \sum_{i=1}^{n-1}\,G_i^*y_i^k + y_n^k$ \\[3mm] \label{ln:line502}  
$\varphi_k = \inner{z^k}{v^k} + \sum_{i=1}^{n-1}\,\inner{w_i^k}{u_i^k}
             -\sum_{i=1}^n\,\left[\inner{x_i^k}{y_i^k}
            +\dfrac{1}{4\beta_i}\norm{x_i^k - G_i z^k}^2\right]$\\[3mm] \label{ln:line700}
$\pi_k = \gamma^{-1}\norm{v^k}^2 + \sum_{i=1}^{n-1}\norm{u_i^k}^2 $\\[3mm] \label{ln:line503}
   \eIf{$\varphi_k>0$}{
           Choose $\tau_k\in [\underline{\tau},\overline{\tau}]$\\[3mm]
            $\alpha_k  = \tau_k\dfrac{\varphi_k}{\pi_k}$\\[3mm] \label{ln:line506}
            $z^{k+1} = z^k - \gamma^{-1}\alpha_k v^k$\\[3mm] \label{ln:line504}
           \For{$i=1,\dots, n-1$}
           {$w_i^{k+1} = w_i^k -\alpha_k u_i^k$\\ \label{ln:line505}}
           }{
            $z^{k+1} = z^k$\\[3mm]  \label{ln:line500}
            \For{$i=1,\dots, n-1$}
           {$w_i^{k+1} = w_i^k$\\ \label{ln:line505b}}
           }
$ w_n^{k+1} = -\sum_{i=1}^{n-1}\,G_i^* w_i^{k+1}$ \\ \label{ln:line511}
 }
\end{algorithm}
%

\mgap

\noindent
Next we make some remarks regarding Algorithm \ref{alg_main}:

\begin{itemize}
\item[(i)] If one is lucky enough to find $(G_i z^k, w_i^k)$ in the graph of $T_i$, as in line \ref{ln:line600}, 
then one simply set $x_i^k$ and $y_i^k$ as $G_i z^k$ and $w_i^k$, respectively,  and keep $\rho_i^k$ equal to the constant 
$\hat \rho>0$.
\item[(ii)] Note that if $i\in \mathcal{I}_{\mathcal{D}}$ we may need a bracketing/bisection procedure to find
a triple $(\rho_i^k, x_i^k,y_i^k)$ satisfying the conditions in lines \eqref{ln:line508}-\eqref{ln:line510}. As we will prove in Section \ref{sec:bbp}, the bracketing/bisection procedure (Algorithm \ref{alg_proc}) terminates after a finite number of steps - see Proposition \ref{pr:bterminates}. On the other hand, if $i\notin \mathcal{I}_{\mathcal{D}}$, in which case 
$D_i=0$, no bracketing/bisection procedure is required, and the computation of $(\rho_i^k, x_i^k,y_i^k)$ will depend only on the evaluation of $J_{A_i}$, $G_i$, $B_i$ and $C_i$ (see lines \ref{ln:line514}-\ref{ln:line513}).
We also mention that the problem of removing this bracketing/bisection (in the context of monotone 
inclusions for a single operator) has been posed as a challenging and difficult open problem
(see, e.g., \cite{bullins2022lai, jiang2022mokhtari, jordan-controlpreprint20, jordan-perseus22, 
Lin2023Jordan, Nesterov2018Book}).
\item[(iii)] Now we discuss the computation of resolvent $J_{\rho(A_i+D_{i\;(u)})}:=\left(\rho(A_i+D_{i\;(u)})+I\right)^{-1}$, as in line \ref{ln:line508}.
Note that computing $x^{+}=J_{\rho(A_i+D_{i\;(u)})}(y)$ is equivalent to solving the regularized inclusion problem
\begin{align*}
 0\in \rho(A_i(x)+D_{i\;(u)}(x)) + x -y, 
\end{align*}
which, in the setting of the optimization problem \eqref{eq:prob_min} (see also \eqref{eq:op_fun}), gives $x^{+}$ as the solution of the following (second-order in $h_i$) regularized model
\begin{align}
 \label{eq:prob_quad}
 \min_{x\in \HH_i}\,\left\{f_i(x)+h_i(u)+\inner{\nabla h_i(u)}{x-u}+
\dfrac{1}{2}\inner{\nabla^2 h_i(u)(x-u)}{x-u} + \dfrac{1}{2\rho}\norm{x-y}^2\right\}.
\end{align}

Hence, in the optimization setting \eqref{eq:prob_min}, the execution of Algorithm \ref{alg_main} will be, in particular, supported on the assumption that one knows how to solve \eqref{eq:prob_quad} efficiently, for any given 
$y,u \in \HH_i$ and $\rho>0$.

More precisely, still in the optimization setting, $x_i^k$ as in line \ref{ln:line508} is given as 
\begin{align}
 \label{eq:prob_quad02}
\nonumber
& x_i^k=\mbox{arg}\min_{x\in \HH_i}\,\Bigg\{f_i(x)+h_i(G_iz^k)+\inner{\nabla h_i(G_iz^k)}{x-G_iz^k}+\\
&\dfrac{1}{2}\inner{\nabla^2 h_i(G_iz^k)(x-G_iz^k)}{x-G_iz^k} + 
\dfrac{1}{2\rho_i^k}\left\|x-\left[G_iz^k + \rho_i^k w_i^k - \rho_i^k  
            (B_i+C_i)(G_i z^k)\right]\right\|^2\Bigg\},
\end{align}
which, if $\mathcal{I}_\mathcal{D}\neq \emptyset$, represents the computational burden of Algorithm \ref{alg_main}. In this case, $y_i^k$ in line \ref{ln:line509} is given explicitly as (recall that $B_i=0$; see \eqref{eq:op_fun}):
\begin{align*}
y_i^k = \dfrac{G_iz^k-x_i^k}{\rho_i^k} + w_i^k
              + \left[\nabla h_i(x_i^k)-\nabla h_i(G_iz^k)-\nabla^2 h_i(G_iz^k)(x_i^k-G_iz^k)\right].
\end{align*}
Note that problem \eqref{eq:prob_quad02} can be solved by first-order methods, for example, under the standard assumption that the proximal operator $(\lambda \partial f_i + I)^{-1}$ of each $f_i$ ($1\leq i\leq n$) can be efficiently computed (see, e.g., \cite{CombettesPesquet2011, Nesterov2018Book}).
\item[(iv)] The condition in line \ref{ln:line510}, namely
\[
 \underline{\theta}\leq 4\,\ell_i^2 (\rho_i^k)^2 + \left(\beta_i^{-1}+\hat \delta\right)\rho_i^k +
            \left(m_i \rho_i^k \norm{x_i^k - G_i z^k}\right)^2\leq \overline{\theta},
\]
will be important in the proof of Proposition \ref{pr:geq} below (see Eq. \eqref{eq:djairo02}). 
\item[(v)] If $\mathcal{I}_\mathcal{D} = \emptyset$, that is, if there is no differentiable components $D_i$ ($1\leq i\leq n$) in 
\eqref{eq:prob}, then it follows that Algorithm \ref{alg_main} is closely related to the main 
algorithms in~\cite{combettes2022saddle,joh.eck-single.coap21,eckstein2022forward}. On the other hand, as we have already discussed, none of these works have considered the framework given by problem \eqref{eq:prob}, under the 
Assumptions (A1)-(A7) above (see also Assumption (A8) below).
\end{itemize}

\section{Convergence analysis of Algorithm \ref{alg_main}}
 \label{sec:conv}

The main result in this section is Theorem \ref{th:main} on the weak convergence of Algorithm \ref{alg_main}.
The main result in Proposition \ref{pr:special} is given in item (e), which says that Algorithm \ref{alg_main} is a special instance of the separator-projection framework given in Algorithm \ref{alg_gen}.
Next, under the additional Assumption (A8) - see \eqref{eq:conway} below- on the proximal parameters $\rho_i^k$ ($1\leq i\leq n$) we prove, in Lemma \ref{lm:galeao02}, several results on the boundedness of some of the sequences evolved by Algorithm \ref{alg_main}.
Lemma \ref{lm:dantzig} is an auxiliary result and Proposition \ref{pr:geq} will be useful while proving our main theorem in this section, namely Theorem \ref{th:main}. Lemma \ref{lm:previous} yields a boundedness result on the gradients of the sequence of separators $\varphi_k(\cdot)$ - see \eqref{eq:def.phi2} - and some asymptotic results involving the 
sequences $(x_i^k)$, $(y_i^k)$ and $(w_i^k)$ ($1\leq i\leq n$).

We begin with the following technical lemma, which will be useful in the proofs of 
Proposition \ref{pr:special} and Theorem \ref{th:main}.

 \begin{lemma}
   \label{lm:fund}
     Let $(x_i^k)$, $(y_i^k)$ \emph{(}$i=1,\dots, n$\emph{)} and $(z^k)$ be generated by \emph{Algorithm \ref{alg_main}}.
    For any $(r_i, a_i)\in \emph{Gr}(A_i)$, $i=1,\dots, n$, we have
    \begin{align} 
       \label{eq:taylor}
     \inner{r_i - x_i^k}{y_i^k - (B_i+C_i+D_i)(r_i) - a_i}\leq \dfrac{1}{4\beta_i}
\norm{x_i^k - G_i z^k}^2\qquad (i=1,\dots, n).
    \end{align}
 \end{lemma}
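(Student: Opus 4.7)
The plan is to extract from the construction of $(x_i^k, y_i^k)$ an element of $A_i(x_i^k)$ hidden inside $y_i^k$, reduce the statement to monotonicity of $A_i$, $B_i$, $D_i$ together with a single cocoercivity/Young's inequality estimate on the $C_i$-part. The main obstacle is really just carefully identifying the $A_i$-element across the three branches of the algorithm; once that is done, the rest is routine.

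\textbf{Step 1: Exhibit $b_i^k \in A_i(x_i^k)$ with the clean decomposition.} I claim that in each of the three branches of \emph{Algorithm \ref{alg_main}} (lines \ref{ln:line600}, \ref{ln:line508}--\ref{ln:line509}, and \ref{ln:line512}--\ref{ln:line513}),
\[
y_i^k \;=\; b_i^k + B_i(x_i^k) + C_i(G_i z^k) + D_i(x_i^k),\qquad b_i^k\in A_i(x_i^k).
\]
In the trivial branch ($w_i^k\in T_i(G_i z^k)$), take $b_i^k := w_i^k - (B_i+C_i+D_i)(G_i z^k)$, which lies in $A_i(G_i z^k) = A_i(x_i^k)$ and reproduces $y_i^k=w_i^k$. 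For $i\in \mathcal{I}_{\mathcal{D}}$, line \ref{ln:line508} and the definition of the resolvent give
\[
b_i^k := \frac{G_i z^k - x_i^k}{\rho_i^k} + w_i^k - (B_i+C_i)(G_i z^k) - D_{i\,(G_i z^k)}(x_i^k) \in A_i(x_i^k),
\]
and substituting into line \ref{ln:line509} cancels the $-D_{i\,(G_i z^k)}(x_i^k)$ against the bracketed term, producing the decomposition. The case $i\notin \mathcal{I}_{\mathcal{D}}$ is identical with $D_i\equiv 0$.

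\textbf{Step 2: Rewrite the inner product.} For any $(r_i,a_i)\in \mathrm{Gr}(A_i)$, using Step 1,
\begin{align*}
\inner{r_i - x_i^k}{y_i^k - (B_i+C_i+D_i)(r_i) - a_i}
&= \inner{r_i - x_i^k}{b_i^k - a_i} \\
&\quad + \inner{r_i - x_i^k}{B_i(x_i^k) - B_i(r_i)} \\
&\quad + \inner{r_i - x_i^k}{D_i(x_i^k) - D_i(r_i)} \\
&\quad + \inner{r_i - x_i^k}{C_i(G_i z^k) - C_i(r_i)}.
\end{align*}
The first three terms on the right are $\leq 0$ by monotonicity of $A_i$, $B_i$ and $D_i$ (assumptions (A2), (A3), (A5)).

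\textbf{Step 3: Estimate the $C_i$-term.} Split $r_i - x_i^k = (r_i - G_i z^k) + (G_i z^k - x_i^k)$. By $\beta_i$-cocoercivity of $C_i$,
\[
\inner{r_i - G_i z^k}{C_i(G_i z^k) - C_i(r_i)} \leq -\beta_i\norm{C_i(G_i z^k) - C_i(r_i)}^2,
\]
and by Young's inequality,
\[
\inner{G_i z^k - x_i^k}{C_i(G_i z^k) - C_i(r_i)} \leq \frac{1}{4\beta_i}\norm{x_i^k - G_i z^k}^2 + \beta_i\norm{C_i(G_i z^k) - C_i(r_i)}^2.
\]
Adding these two bounds cancels the $\beta_i\|\cdot\|^2$ terms and yields $\inner{r_i - x_i^k}{C_i(G_i z^k) - C_i(r_i)} \leq \tfrac{1}{4\beta_i}\norm{x_i^k - G_i z^k}^2$. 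Combined with Step 2, this gives \eqref{eq:taylor}. The conventions $1/\infty = 0$ and $0\cdot \infty = 0$ cover the degenerate case $\beta_i = \infty$ (where $C_i$ is constant and the $C_i$-term vanishes identically).
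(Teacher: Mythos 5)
Your proof is correct and follows essentially the same route as the paper: extract the $A_i$-element $y_i^k-(B_i+D_i)(x_i^k)-C_i(G_iz^k)\in A_i(x_i^k)$ from the resolvent step, discard the $A_i$-, $B_i$- and $D_i$-terms by monotonicity, and bound the remaining $C_i$-term by splitting $r_i-x_i^k$ and combining cocoercivity with Young's inequality (the paper phrases this last step as maximizing the quadratic $t\mapsto-\beta_it^2+\norm{x_i^k-G_iz^k}t$, which is the same estimate). The only cosmetic difference is that you fold the trivial branch of line \ref{ln:line600} into the unified decomposition, whereas the paper dispatches it separately via monotonicity of $T_i$.
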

 \begin{proof}
 First note that if $(x_i^k,y_i^k)$ is as in line \ref{ln:line600} of Algorithm \ref{alg_main}, then we obtain that the right-hand side of \eqref{eq:taylor} is equal to zero and that the inclusion $y_i^k \in (A_i+B_i+C_i+D_i)(x_i^k)$ hold. The latter combined with the inclusion $a_i\in A_i(r_i)$ and the monotonicity of
 $A_i+B_i+C_i+D_i$ yields the desired inequality in \eqref{eq:taylor} (since the right-hand side of \eqref{eq:taylor}, in this case, is zero).
 
Assume now that $(\rho_i^k,x_i^k,y_i^k)$ is as in lines \ref{ln:line508}-\ref{ln:line510}
(resp. \ref{ln:line514}-\ref{ln:line513})
of Algorithm \ref{alg_main}.
 From lines \ref{ln:line508} and \ref{ln:line509} (resp. \ref{ln:line512} and \ref{ln:line513}), the definition of 
$J_{\rho_i^k \left(A_i+D_{i \,(G_i z^k)}\right)}$ (resp. $J_{\rho_i^k A_i}$) and some simple algebra we find 
$y_i^k - (B_i+D_i)(x_i^k)-C_i(G_iz^k)\in A_i (x_i^k)$
(By Assumption (A7) in Section \ref{sec:alg}, we have $D_i=0$ whenever $i\notin \mathcal{I}_{\mathcal{D}}$.), i.e.,
\begin{align*}
(x_i^k,  y_i^k - (B_i+D_i)(x_i^k)-C_i(G_i z^k))\in \mbox{Gr}(A_i),
\end{align*}
which, in turn, when combined with the assumption that $(r_i,a_i)\in \mbox{Gr}(A_i)$ and the monotonicity of $A_i$ yields
\begin{align*}
 \inner{r_i - x_i^k}{y_i^k - (B_i+D_i)(x_i^k)-C_i(G_i z^k)  - a_i}\leq 0.
\end{align*}
The latter inequality combined with the monotonicity of $B_i + D_i$ gives
\begin{align}
\nonumber
\inner{r_i - x_i^k}{y_i^k  - (B_i+C_i+D_i)(r_i) - a_i} &= 
 \inner{r_i - x_i^k}{y_i^k - (B_i+D_i)(x_i^k) - C_i(G_i z^k) - a_i}\\
 \nonumber
  &\hspace{1cm}+\inner{r_i  - x_i^k}{(B_i+D_i)(x_i^k) - (B_i+D_i)(r_i)} \\
\nonumber
&\hspace{1cm}+\inner{r_i  - x_i^k}{C_i(G_iz^k)- C_i(r_i)}\\
  \label{eq:1137}
    & \leq \inner{r_i  - x_i^k}{C_i(G_iz^k)- C_i(r_i)}.
\end{align}
If $\beta_i< \infty$, using Assumption (A4), the Cauchy-Schwarz inequality and some algebraic manipulations,
we obtain
\begin{align*}
\inner{r_i  - x_i^k}{C_i(G_iz^k)- C_i(r_i)} &= \inner{r_i  - G_i z^k}{C_i(G_iz^k)- C_i(r_i)} + \inner{G_i z^k - x_i^k}{C_i(G_iz^k)- C_i(r_i)}\\
    &\leq -\beta_i\norm{C_i(G_iz^k)- C_i(r_i)}^2 + \norm{x_i^k - G_i z^k}\norm{C_i(G_iz^k)- C_i(r_i)},
\end{align*}
which combined with \eqref{eq:1137} and the fact that the maximum of the quadratic real function $t\mapsto -\beta_i t^2 + \norm{G_i z^k - x_i^k}\,t$ is $\norm{x_i^k - G_i z^k}^2/(4\beta_i)$ yields exactly \eqref{eq:taylor}.

On the other hand, if $\beta_i=\infty$, then using the conventions 
$r\cdot\infty=\infty$ if $r>0$ and $1/\infty=0$ (see ``General notation'' in Section \ref{sec:int}) and \eqref{eq:prop.cocox} with 
$x=G_i z^k$ and $y=r_i$, we obtain $C_i(G_iz^k)=C_i(r_i)$.
Hence, from \eqref{eq:1137},
\begin{align*}
\inner{r_i - x_i^k}{y_i^k - (B_i+C_i+D_i)(r_i) - a_i}\leq 0 = \dfrac{1}{4\beta_i}\norm{x_i^k - G_i z^k}^2,
\end{align*}
which finishes the proof of the lemma.
\end{proof}

\mgap
\mgap

Consider now the sequences evolved by 
 Algorithm \ref{alg_main} and define $\varphi_k:\HHH\to \R$ by
\begin{align}
\label{eq:def.phi2}
 & \varphi_k (\underbrace{z,w_1,\dots, w_{n-1}}_{p})
 =\inner{z}{v^k} + \sum_{i=1}^{n-1}\,\inner{w_i}{u_i^k}
  -\sum_{i=1}^n\,\left[\inner{x_i^k}{y_i^k}
   +\dfrac{1}{4\beta_i}\norm{x_i^k - G_i z^k}^2\right].
 \end{align}
Define also $(p^k)$ by
 \begin{align}
  \label{eq:def.pk}
 & p^k = (z^k,w_1^k,\dots, w_{n-1}^k),\qquad \forall k\geq 0.
 \end{align}

\mgap

Next we prove, in particular, that Algorithm \ref{alg_main} is a special instance of Algorithm \ref{alg_gen}.

\begin{proposition}
 \label{pr:special}
Let $\varphi_k(\cdot)$ and $p^k$ be as \eqref{eq:def.phi2} and \eqref{eq:def.pk}, respectively, and let the nonempty closed and convex set $\mathcal{S}$ be as in \eqref{eq:def.S}. 
 The following holds for all $k\geq 0$:
 \begin{itemize}
\item[\emph{(a)}] We have
\begin{align}
 \label{eq:browder2}
 \nabla \varphi_k = \left(\gamma^{-1}v^k, u_1^k, u_2^k,\dots, u_{n-1}^k\right)\;\;\mbox{and}\;\; 
 \norm{\nabla \varphi_k}_\gamma^2 = \pi_k
\end{align}
where $\nabla \varphi_k$ is taken with respect to the inner product $\inner{\cdot}{\cdot}_\gamma$ as in \eqref{eq:def.inner}.
\item[\emph{(b)}] For all $p\in \HHH$,
\begin{align}
 \label{eq:car.phi}
 \varphi_k (p) = \sum_{i=1}^n\,\left[\inner{G_i z - x_i^k}{y_i^k - w_i}-\dfrac{1}{4\beta_i}\norm{x_i^k - G_i z^k}^2\right],
\end{align}
where $w_n$ is as in \eqref{eq:def.wn}.
\item[\emph{(c)}] $\varphi_k(\cdot)$ is an affine function in $\HHH$ and
$\mathcal{S}\subset \{p\in \HHH\;|\;\varphi_k(p)\leq 0\}$.
\item[\emph{(d)}] We have
   \begin{align}
    \label{eq:minty}
   p^{k+1}= p^{k} - \tau_k \dfrac{\varphi_k(p^{k})}{\norm{\nabla \varphi_k}_\gamma^2}\nabla \varphi_k\;\;\;
   \mbox{if}\;\;\; \varphi_k(p^k)>0\;\;\; \mbox{else}\;\;\;  p^{k+1}=p^k.
   \end{align}
\item[\emph{(e)}] \emph{Algorithm \ref{alg_main}} is a special instance
   of \emph{Algorithm \ref{alg_gen}} for finding a point in $\mathcal{S}$ as in \eqref{eq:def.S}.
\item[\emph{(f)}] The sequences $(z^k)$ and $(w_i^k)$ \emph{(}$i=1,\dots, n$\emph{)} are bounded.
\end{itemize}

\end{proposition}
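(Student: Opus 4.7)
The plan is to verify (a)--(f) in the order stated, since each part feeds the next. The argument is essentially algebraic bookkeeping plus one application each of Lemma \ref{lm:fund} and Lemma \ref{lm:alg_gen}.

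For (a), I would observe that \eqref{eq:def.phi2} is affine in $p=(z,w_1,\ldots,w_{n-1})$ with linear part $\inner{z}{v^k}+\sum_{i=1}^{n-1}\inner{w_i}{u_i^k}$ and remaining terms depending only on iterate data. Matching $\varphi_k(p+h)-\varphi_k(p)$ against $\inner{h}{\nabla\varphi_k}_\gamma$ via the definition \eqref{eq:def.inner} forces $\nabla\varphi_k=(\gamma^{-1}v^k,u_1^k,\ldots,u_{n-1}^k)$, so $\norm{\nabla\varphi_k}_\gamma^2=\gamma^{-1}\norm{v^k}^2+\sum_{i=1}^{n-1}\norm{u_i^k}^2=\pi_k$ by line \ref{ln:line503}.

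For (b), I would expand the right-hand side of \eqref{eq:car.phi} into four groups of inner products and simplify using $G_n=I$, the definitions of $v^k$ and $u_i^k$ in lines \ref{ln:line502} and \ref{ln:line501}, and the identity $\sum_{i=1}^n G_i^* w_i=0$ (which follows from $w_n:=-\sum_{i=1}^{n-1}G_i^*w_i$): the $\inner{G_iz}{y_i^k}$ terms collapse into $\inner{z}{v^k}$, the $\inner{G_iz}{w_i}$ terms vanish, and the $\inner{x_i^k}{w_i}$ terms collapse into $\sum_{i=1}^{n-1}\inner{w_i}{u_i^k}$. This matches \eqref{eq:def.phi2} precisely. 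Part (c) then follows: affinity is immediate from \eqref{eq:def.phi2}, and given any $p\in\mathcal{S}$ I would decompose $w_i=a_i+(B_i+C_i+D_i)(G_iz)$ with $a_i\in A_i(G_iz)$ and apply Lemma \ref{lm:fund} with $r_i:=G_iz$; summing the resulting inequalities over $i$ and invoking the representation \eqref{eq:car.phi} yields $\varphi_k(p)\le 0$.

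Part (d) is then a one-line consequence of (a) combined with the update rules in lines \ref{ln:line506}--\ref{ln:line505b}: assembling the coordinate updates of $z^{k+1}$ and the $w_i^{k+1}$ produces exactly $p^{k+1}=p^k-\alpha_k\nabla\varphi_k$ with $\alpha_k=\tau_k\varphi_k(p^k)/\norm{\nabla\varphi_k}_\gamma^2$ in the active case, and $p^{k+1}=p^k$ otherwise. Part (e) is then immediate from (c) and (d). For (f), I would invoke Lemma \ref{lm:alg_gen}(a) to deduce boundedness of $(p^k)$ in $(\HHH,\norm{\cdot}_\gamma)$, which in particular yields boundedness of $(z^k)$ and of $(w_i^k)$ for $i=1,\ldots,n-1$; boundedness of $(w_n^k)$ then follows from line \ref{ln:line511} together with boundedness of each $G_i^*$. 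The only delicate point of the whole argument is the bookkeeping in (b)--(c): one must carefully keep the derived coordinate $w_n$ separate from the coordinates of $p$ and verify that Lemma \ref{lm:fund} cleanly absorbs the $(B_i+C_i+D_i)(G_iz)$ contributions into $w_i$; once this is done, everything else is routine.
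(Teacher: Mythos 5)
Your proposal is correct and follows essentially the same route as the paper's proof: the gradient identification in (a) via the $\gamma$-weighted inner product, the algebraic collapse in (b) using $G_n=I$ and $\sum_{i=1}^n G_i^*w_i=0$, the application of Lemma \ref{lm:fund} with $(r_i,a_i)=(G_iz,\,w_i-(B_i+C_i+D_i)(G_iz))$ for (c), and Lemma \ref{lm:alg_gen}(a) for (f) are all exactly the paper's steps. No gaps.
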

\begin{proof}
(a) This follows directly from \eqref{eq:def.phi2}, lines \ref{ln:line501}--\ref{ln:line503} of Algorithm \ref{alg_main} and the definitions
of $\inner{\cdot}{\cdot}_\gamma$ and $\|\cdot\|_\gamma$ in \eqref{eq:def.inner}.

(b) This follows from \eqref{eq:def.phi2}, lines \ref{ln:line501} and \ref{ln:line502} of Algorithm \ref{alg_main}, \eqref{eq:def.wn}
and some simple algebraic manipulations.

(c) The fact that $\varphi_k(\cdot)$ is affine also follows directly from \eqref{eq:def.phi2}. 
To finish the proof of (c), take any $p=(z,w_1,\dots, w_{n-1})\in \mathcal{S}$ and let $w_n$ be as in \eqref{eq:def.wn}. 
From the inclusions in the definition of $\mathcal{S}$ we obtain 
$w_i - (B_i+C_i+D_i)(G_iz)\in A_i(G_i z)$ ($i=1,\dots, n$). Hence, using
Lemma \ref{lm:fund} with $(r_i, a_i)= (G_i z, w_i - (B_i+C_i+D_i)(G_iz))$ ($i=1,\dots, n$) we find
\begin{align*} 
     \inner{G_i z-x_i^k}{y_i^k - w_i} - \dfrac{1}{4\beta_i}\norm{x_i^k - G_i z^k}^2\leq 0\qquad (i=1,\dots, n).
    \end{align*}
The fact that $\varphi_k(p)\leq 0$ for all $p\in \mathcal{S}$ now follows by
adding the above inequalities from $i=1$ to $i=n$ and using \eqref{eq:car.phi}.

(d) Using \eqref{eq:def.phi2}, \eqref{eq:def.pk}, \eqref{eq:browder2} and 
lines \ref{ln:line700}, \ref{ln:line506}, \ref{ln:line504}, \ref{ln:line505}, \ref{ln:line500} and \ref{ln:line505b}
of Algorithm \ref{alg_main} we obtain \eqref{eq:minty}. 

(e) This follows from items (c), (d) and Algorithm \ref{alg_gen}'s definition.

(f) This is a consequence of item (e), Lemma \ref{lm:alg_gen}(a), \eqref{eq:def.pk} and \eqref{eq:def.wn}.
\end{proof}

\mgap

From now on we also assume the following regarding the parameter $\rho_i^k>0$ as in line \ref{ln:line514}
of Algorithm \ref{alg_main}:
\begin{itemize}
 \item[(A8)] For $i\in \{1,\dots,n\}\backslash\mathcal{I}_{\mathcal{D}}$,
   \begin{align}
    \label{eq:conway}
   0<\underline{\rho}_{\,i}\leq \rho_i^k\leq \overline{\rho}_i< \dfrac{1}{\dfrac{1}{4\beta_i}+\ell_i}\qquad \forall k\geq 0,
  \end{align}
where $\ell_i\in [0,\infty)$ and $\beta_i\in (0,\infty]$ are as in Assumptions (A3) and (A4), respectively. 
\end{itemize}

\mgap

\begin{lemma}
 \label{lm:galeao02}
  Let $(x_i^k)$, $(y_i^k)$ and $(\rho_i^k)$ \emph{(}$1\leq i\leq n$\emph{)} be generated by \emph{Algorithm \ref{alg_main}}.
  The following holds for $i=1,\dots, n$:
\begin{itemize}
\item[\emph{(a)}] The sequence $(\rho_i^k)$ is bounded.
\item[\emph{(b)}] The sequence $(x_i^k)$ is bounded.
\item[\emph{(c)}] There exist $\rho_{\emph{min}},\rho_{\emph{max}}>0$ such that
$0<\rho_{\emph{min}}\leq \rho_i^k\leq \rho_{\emph{max}}<\infty$ \quad $\forall k\geq 0$.
\item[\emph{(d)}] The sequence $(y_i^k)$ is bounded.
\end{itemize}
\end{lemma}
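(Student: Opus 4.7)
The plan is to argue item-by-item, using Proposition~\ref{pr:special}(f) (boundedness of $(z^k)$ and $(w_i^k)$) and handling separately the three possible branches in Algorithm~\ref{alg_main}: (i) line~\ref{ln:line600}, where $(x_i^k,y_i^k)=(G_iz^k,w_i^k)$ and $\rho_i^k=\hat\rho$; (ii) $i\in\mathcal{I}_{\mathcal{D}}$, lines~\ref{ln:line508}--\ref{ln:line510}; and (iii) $i\notin\mathcal{I}_{\mathcal{D}}$, lines~\ref{ln:line512}--\ref{ln:line513}, for which the proximal parameter is controlled by Assumption~(A8).

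For \textbf{item (a)}, in branch (i) the bound is trivial, and in branch (iii) it is given directly by \eqref{eq:conway}. The only nontrivial case is branch (ii), where the upper bound in line~\ref{ln:line510} yields $(\beta_i^{-1}+\hat\delta)\rho_i^k\leq\overline{\theta}$, so $\rho_i^k\leq\overline{\theta}/(\beta_i^{-1}+\hat\delta)$ (recall $\hat\delta>0$ and $\beta_i^{-1}\in[0,\infty)$).

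For \textbf{item (b)}, branch (i) is immediate from boundedness of $(z^k)$. In branch (ii), I would write $x_i^k=J_{\rho_i^k(A_i+D_{i\,(G_iz^k)})}(s_i^k)$ with $s_i^k:=G_iz^k+\rho_i^k w_i^k-\rho_i^k(B_i+C_i)(G_iz^k)$, and apply Lemma~\ref{lm:bound} with $T=A_i$, $u^k=G_iz^k$, $\rho^k=\rho_i^k$; the input sequences are bounded because $(z^k)$ and $(w_i^k)$ are bounded, $(\rho_i^k)$ is bounded by (a), and both $B_i$ and $C_i$ (Lipschitz and $\beta_i$-cocoercive, hence Lipschitz or constant in the $\beta_i=\infty$ convention) map bounded sets into bounded sets. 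Branch (iii) is analogous with $D_i\equiv 0$, and in fact the nonexpansiveness of $J_{\rho_i^k A_i}$ (anchored at any $(w,a)\in\mathrm{Gr}(A_i)$) directly gives $\|x_i^k-w\|\leq\|s_i^k-\rho_i^k a-w\|$.

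For \textbf{item (c)}, the existence of a uniform upper bound $\rho_{\max}$ is just item (a) applied to each of finitely many indices. For the lower bound, branches (i) and (iii) again give $\hat\rho$ and $\underline{\rho}_{\,i}$ respectively. The key step is branch (ii): from item (b) and boundedness of $(G_iz^k)$, set $M_i:=\sup_k\|x_i^k-G_iz^k\|<\infty$; then line~\ref{ln:line510} gives
\[
\underline{\theta}\;\leq\;\bigl(4\ell_i^2+m_i^2 M_i^2\bigr)(\rho_i^k)^2+(\beta_i^{-1}+\hat\delta)\rho_i^k,
\]
which is a quadratic lower bound in $\rho_i^k$ forcing $\rho_i^k$ away from zero (solve the quadratic, or simply note that otherwise the right-hand side $\to 0$). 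This is the step I expect to be the most delicate, because it relies on (b) having been established first, which is why items (a), (b), (c) should be proved in this order.

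For \textbf{item (d)}, I would substitute the explicit formula from lines~\ref{ln:line509} or \ref{ln:line513} and bound each term: $(G_iz^k-x_i^k)/\rho_i^k$ is bounded by (b) and the lower bound from (c); $w_i^k$ is bounded by Proposition~\ref{pr:special}(f); $B_i(x_i^k)-B_i(G_iz^k)$ is bounded via the Lipschitz constant $\ell_i$ applied to the bounded sequence $x_i^k-G_iz^k$; and for $i\in\mathcal{I}_{\mathcal{D}}$, the last term $D_i(x_i^k)-D_{i\,(G_iz^k)}(x_i^k)$ is bounded by Lemma~\ref{lm:lip} (which gives $(m_i/2)\|x_i^k-G_iz^k\|^2$) combined again with (b). Branch (i) is trivial since $y_i^k=w_i^k$.
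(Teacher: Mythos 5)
Your proof is correct and follows essentially the same route as the paper: bound $\rho_i^k$ from above via line \ref{ln:line510} and Assumption (A8), apply Lemma \ref{lm:bound} for the boundedness of $(x_i^k)$, then use that boundedness together with the first inequality in line \ref{ln:line510} to get the quadratic lower bound on $\rho_i^k$, and finally read off the boundedness of $(y_i^k)$ from the explicit formulas, Lemma \ref{lm:lip} and the lower bound in (c). The ordering (a)$\to$(b)$\to$(c)$\to$(d) and the handling of the three branches match the paper's argument.
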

\begin{proof}
(a) If $(\rho_i^k,x_i^k,y_i^k)$ is as in line \ref{ln:line600} of Algorithm \ref{alg_main}, then $\rho_i^k=\hat \rho$.
On the other hand, if $(\rho_i^k,x_i^k,y_i^k)$ is as in lines \ref{ln:line508}--\ref{ln:line510}, then
using the second inequality in line \ref{ln:line510} we obtain, $\rho_i^k\leq \overline{\theta}/\hat \delta$.
Now, if $i\in \{1,\dots, n\}\backslash \mathcal{I}_\mathcal{D}$, then it follows from \eqref{eq:conway}
that $\rho_i^k\leq \overline{\rho}:=\max\,\overline{\rho}_i$.
Altogether, we proved that, for $i=1,\dots, n$,
\begin{align}
  \label{eq:galeao03}
 0<\rho_i^k\leq \rho_{\mbox{max}}:=\max\left\{\hat\rho, \dfrac{\overline{\theta}}{\hat \delta},\overline{\rho}\right\}
\qquad \forall k\geq 0.
\end{align}

(b) Take $i\in \{1,\dots, n\}$ and let $u^k:=G_i z^k$ and $s^k:=G_iz^k + \rho_i^k w_i^k - \rho_i^k 
     (B_i+C_i)(G_i z^k)$.
From Proposition \ref{pr:special}(f), the (Lipschitz) continuity of $B_i$, $C_i$
and $G_i$, and \eqref{eq:galeao03} we obtain that $(u^k)$ and $(s^k)$ are bounded sequences.
Using item (a), Lemma \ref{lm:bound} with $(T,\HH,D,m):=(A_i,\HH_i,D_i,m_i)$ and $(u^k)$ and $(s^k)$ as above, we conclude that $(x_i^k)$ as in line \ref{ln:line508} of Algorithm \ref{alg_main} is bounded. 
On the other hand, using item (a) and Lemma \ref{lm:bound} with $(T,\HH,D,m):=(A_i,\HH_i,0,0)$, 
we find that $(x_i^k)$ as in line \ref{ln:line512} of Algorithm \ref{alg_main} is bounded as well. 
Moreover, $(x_i^k)$ as in line
\ref{ln:line600} of Algorithm \ref{alg_main} is trivially bounded by Proposition \ref{pr:special}(f) and the boundedness of
$G_i$.

(c) By \eqref{eq:galeao03}, it remains to prove that there exists $\rho_{\mbox{min}}>0$ such that 
$\rho_i^k\geq \rho_{\mbox{min}}>0$ ($i=1,\dots, n$) for all $k\geq 0$. 
If $i\in \{1,\dots, n\}\backslash \mathcal{I}_{\mathcal{D}}$, then it follows from \eqref{eq:conway} that
$\rho_i^k\geq \underline{\rho}:=\min\,\underline{\rho}_{\,i}>0$ for all $k\geq 0$. 
If $(\rho_i^k,x_i^k,y_i^k)$ is as in line \ref{ln:line600} of Algorithm \ref{alg_main}, then $\rho_i^k=\hat \rho$.
For $(\rho_i^k,x_i^k,y_i^k)$ is as in lines \ref{ln:line508}--\ref{ln:line510}, 
let $c_i^2:=m_i^2\,\sup_{k}\,\norm{x_i^k-G_i z^k}^2$ and note that
from the first inequality in  line \ref{ln:line510} we obtain 
$(4\ell_i^2 + c_i^2)(\rho_i^k)^2+\left(\beta_i^{-1}+\hat \delta\right)\rho_i^k - \underline{\theta}\geq 0$, which gives
$\rho_i^k \geq 2 \underline{\theta}/\Big(\beta_i^{-1}+\hat \delta+\sqrt{(\beta_i^{-1}+\hat \delta)^2+
4(4\ell_i^2+c_i^2)\underline{\theta}}\;\Big)$.
Altogether, we proved that, for $k\geq 0$,
\begin{align}
  \label{eq:galeao033}
  \rho_i^k\geq \rho_{\mbox{min}}:=
      \min\left\{\underline{\rho},\hat\rho,\dfrac{2\underline{\theta}}
{\max \left\{\beta_i^{-1}+\hat \delta+\sqrt{(\beta_i^{-1}+\hat \delta)^2+4(4\ell_i^2+c_i^2)
\underline{\theta}}\right\}_{i\in \mathcal{I}_{\mathcal{D}}}}\right\}>0,
\end{align}
which yields the desired lower bound on the sequence $(\rho_i^k)$ ($i=1,\cdots, n$).

(d) The boundedness of $(y_i^k)$ (see lines \ref{ln:line600}, \ref{ln:line509} and \ref{ln:line513} of Algorithm \ref{alg_main}) follows from Proposition \ref{pr:special}(f), items (b) and (c) above, the (Lipschitz) continuity of $G_i$, $B_i$ and 
Lemma \ref{lm:lip}.
\end{proof}

\mgap

\begin{lemma}
 \label{lm:dantzig}
 Consider the sequences evolved by \emph{Algorithm \ref{alg_main}}. For 
$i\in \{1,\dots,n\}\backslash\mathcal{I}_{\mathcal{D}}$,
 \begin{align}
   \label{eq:carmo}
  (1-\rho_i^k \ell_i)\norm{x_i^k-G_i z^k}\leq \norm{\rho_i^k(y_i^k-w_i^k)}\leq (1+\rho_i^k \ell_i)\norm{x_i^k-G_i z^k}.
 \end{align}
 \end{lemma}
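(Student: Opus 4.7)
The plan is to proceed by a direct computation starting from the definition of $y_i^k$ given in line \ref{ln:line513} of Algorithm \ref{alg_main}, applicable to indices $i \in \{1,\dots,n\}\setminus\mathcal{I}_{\mathcal{D}}$ (for which $D_i = 0$ by Assumption (A7)). I first dispose of the easy case covered by line \ref{ln:line600}: if $w_i^k \in T_i(G_i z^k)$, then $x_i^k = G_i z^k$ and $y_i^k = w_i^k$, so both sides of \eqref{eq:carmo} vanish and the inequality holds trivially.

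In the main case, lines \ref{ln:line512}--\ref{ln:line513} give
\[
y_i^k = \dfrac{G_i z^k - x_i^k}{\rho_i^k} + w_i^k + \bigl[B_i(x_i^k) - B_i(G_i z^k)\bigr].
\]
Rearranging and multiplying through by $\rho_i^k > 0$, I get the identity
\[
\rho_i^k(y_i^k - w_i^k) = (G_i z^k - x_i^k) + \rho_i^k\bigl[B_i(x_i^k) - B_i(G_i z^k)\bigr].
\]

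From this identity, the upper bound in \eqref{eq:carmo} follows immediately from the triangle inequality together with Assumption (A3) (the $\ell_i$-Lipschitz continuity of $B_i$, cf.\ \eqref{eq:prop.coco}), which yields $\|B_i(x_i^k) - B_i(G_i z^k)\| \leq \ell_i\|x_i^k - G_i z^k\|$. Symmetrically, the lower bound follows from the reverse triangle inequality and the same Lipschitz estimate. No further ingredients are required, and there is no genuine obstacle: the lemma is essentially a one-line consequence of the definition of $y_i^k$ combined with Lipschitz continuity of $B_i$, packaged in a form that will later be convenient for establishing the asymptotic behavior of $(x_i^k - G_i z^k)$ and $(y_i^k - w_i^k)$ in Lemma~\ref{lm:previous}.
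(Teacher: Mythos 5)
Your proof is correct and follows essentially the same route as the paper: isolate the identity $\rho_i^k(y_i^k-w_i^k)=(G_iz^k-x_i^k)+\rho_i^k[B_i(x_i^k)-B_i(G_iz^k)]$ from line \ref{ln:line513}, bound the $B_i$ term by $\rho_i^k\ell_i\norm{x_i^k-G_iz^k}$ via Assumption (A3), and conclude with the triangle and reverse triangle inequalities, after dispatching the trivial case of line \ref{ln:line600}. No issues.
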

\begin{proof}
First note that if $(\rho_i^k, x_i^k, y_i^k)$ is as in line \ref{ln:line600} of Algorithm \ref{alg_main}, then \eqref{eq:carmo} holds trivially.
On the other hand, using line \ref{ln:line513} of Algorithm \ref{alg_main} and Assumption (A3), we find
\begin{align}
\nonumber
  \norm{x_i^k-G_iz^k+\rho_i^k(y_i^k-w_i^k)}& = \norm{\rho_i^k(B_i(x_i^k)-B_i(G_iz^k))}\\
 \label{eq:tucker}
      &\leq \rho_i^k \ell_i\norm{x_i^k-G_i z^k},
\end{align}
which after using the triangle inequality and some algebraic manipulations gives the desired result.
\end{proof}

\mgap

Next proposition will be important in the convergence proof of Algorithm \ref{alg_main}.

\mgap

\begin{proposition}
 \label{pr:geq}
Consider the sequences evolved by 
 \emph{Algorithm \ref{alg_main}}, let $\varphi_k(\cdot)$ and $p^k$ be as in \eqref{eq:def.phi2} and 
 \eqref{eq:def.pk}, respectively, let $\rho_{\emph{min}},\rho_{\emph{max}}>0$ be as in 
 \emph{Lemma \ref{lm:galeao02}(c)} and let 
 $\underline{\rho}:=\min\,\underline{\rho}_{\,i}$ and 
$\overline{\rho}:=\max\,\overline{\rho}_{\,i}$, where 
$\underline{\rho}_{\,i},\overline{\rho}_{\,i}>0$ \emph{(}$1\leq i\leq n$\emph{)} are as in \eqref{eq:conway}. 
Define, for $i=1,\dots, n$,  
\begin{align}
 \label{eq:def.Delta}
 \Delta_i^k:=\inner{G_i z^k - x_i^k}{y_i^k-w_i^k} - \dfrac{1}{4\beta_i}\norm{x_i^k-G_i z^k}^2.
\end{align}
The following holds:
\begin{itemize}
\item[\emph{(a)}] For $i\in \{1,\dots, n\}\backslash \mathcal{I}_{\mathcal{D}}$,
\begin{align}
 \label{eq:lete}
  \Delta_i^k\geq 
     \dfrac{1}{2}\left[\dfrac{1}{\overline{\rho}}-\left(\dfrac{1}{4\beta_i}+\ell_i\right)\right]
      \left(\norm{x_i^k-G_i z^k}^2+\left(\dfrac{\underline{\rho}}{1+\overline{\rho}\ell_i}\right)^2
         \norm{y_i^k-w_i^k}^2\right) \quad \forall k\geq 0.
\end{align}
\item[\emph{(b)}] For $i\in \mathcal{I}_{\mathcal{D}}$,
\begin{align}
  \label{eq:lete02}
\Delta_i^k\geq
 \dfrac{(2 - \overline{\theta})}{4\rho_{\emph{max}}}\norm{x_i^k-G_iz^k}^2 +\dfrac{\rho_{\emph{min}}}{2}\norm{y_i^k-w_i^k}^2
\qquad \forall k\geq 0,
\end{align}
where $0<\overline{\theta}<2$ is as in the input of Algorithm \ref{alg_main}.
\item[\emph{(c)}] There exist $c_1,c_2>0$ such that, for $i=1,\dots, n$,
\begin{align}
 \label{eq:lete03}
\Delta_i^k\geq
 c_1\norm{x_i^k-G_iz^k}^2 + c_2\norm{y_i^k-w_i^k}^2 \qquad \forall k\geq 0.
\end{align}
\item[\emph{(d)}] There exists $c >0$ such that
\begin{align}
 \label{eq:1419}
  \varphi_k(p^k)\geq 
 c \sum_{i=1}^n\,\left[\norm{x_i^k-G_i z^k}^2+\norm{y_i^k-w_i^k}^2\right]
\qquad \forall k\geq 0.
\end{align}
\end{itemize}
\end{proposition}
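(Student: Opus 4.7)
The plan is to prove (a), (b), (c), (d) in order. In both (a) and (b), the trivial ``lucky'' branch of line \ref{ln:line600}, where $(x_i^k, y_i^k) = (G_i z^k, w_i^k)$ forces $\Delta_i^k = 0$ and both sides of the claimed inequalities to vanish, is immediate; only the two nontrivial branches (lines \ref{ln:line508}--\ref{ln:line510} and lines \ref{ln:line514}--\ref{ln:line513}) require work.

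For (a), rearrange line \ref{ln:line513} (using $D_i = 0$ since $i \notin \mathcal{I}_\mathcal{D}$) to $\rho_i^k(y_i^k - w_i^k) = (G_i z^k - x_i^k) + \rho_i^k[B_i(x_i^k) - B_i(G_i z^k)]$, take the inner product with $G_i z^k - x_i^k$, and apply Cauchy--Schwarz and Assumption (A3) to the cross term to obtain
\[ \rho_i^k \inner{G_i z^k - x_i^k}{y_i^k - w_i^k} \geq (1 - \rho_i^k \ell_i)\|x_i^k - G_i z^k\|^2. \]
Dividing by $\rho_i^k$, subtracting $\frac{1}{4\beta_i}\|x_i^k - G_i z^k\|^2$, and using $\rho_i^k \leq \overline{\rho}$ gives $\Delta_i^k \geq [1/\overline{\rho} - 1/(4\beta_i) - \ell_i]\|x_i^k - G_i z^k\|^2$. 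Splitting this into halves and invoking Lemma \ref{lm:dantzig} (together with $\underline{\rho} \leq \rho_i^k \leq \overline{\rho}$) to bound one copy of $\|x_i^k - G_i z^k\|^2$ below by $(\underline{\rho}/(1 + \overline{\rho}\ell_i))^2 \|y_i^k - w_i^k\|^2$ delivers \eqref{eq:lete}. If the coefficient $1/\overline{\rho} - 1/(4\beta_i) - \ell_i$ happens to be nonpositive, then \eqref{eq:lete} is trivially satisfied, since the sharper estimate with $\overline{\rho}_i$ in place of $\overline{\rho}$, combined with Assumption (A8), already forces $\Delta_i^k \geq 0$.

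The main obstacle is (b), where the polarization identity $2\inner{a}{b} = \|a\|^2 + \|b\|^2 - \|a-b\|^2$ with $a = G_i z^k - x_i^k$ and $b = \rho_i^k(y_i^k - w_i^k)$ is the right tool. By line \ref{ln:line509},
\[ a - b = -\rho_i^k[B_i(x_i^k) - B_i(G_i z^k)] - \rho_i^k[D_i(x_i^k) - D_{i\,(G_i z^k)}(x_i^k)]; \]
using $\|u + v\|^2 \leq 2\|u\|^2 + 2\|v\|^2$, Assumption (A3), and Lemma \ref{lm:lip} applied to the linearization error of $D_i$ bounds $\|a - b\|^2 \leq [2(\rho_i^k \ell_i)^2 + \frac{1}{2}(m_i \rho_i^k \|x_i^k - G_i z^k\|)^2]\|x_i^k - G_i z^k\|^2$. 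Substituting into polarization and subtracting $\frac{\rho_i^k}{2\beta_i}\|x_i^k - G_i z^k\|^2$ gives
\[ 2\rho_i^k \Delta_i^k \geq \left[1 - 2(\rho_i^k\ell_i)^2 - \tfrac{1}{2}(m_i \rho_i^k \|x_i^k - G_i z^k\|)^2 - \tfrac{\rho_i^k}{2\beta_i}\right]\|x_i^k - G_i z^k\|^2 + (\rho_i^k)^2\|y_i^k - w_i^k\|^2. \]
The crucial observation is that halving the upper bound from line \ref{ln:line510} yields $2(\rho_i^k\ell_i)^2 + \rho_i^k/(2\beta_i) + \frac{1}{2}(m_i \rho_i^k \|x_i^k - G_i z^k\|)^2 \leq \overline{\theta}/2$, so the bracketed coefficient is at least $(2 - \overline{\theta})/2 > 0$. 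Dividing by $2\rho_i^k$ and using $\rho_{\text{min}} \leq \rho_i^k \leq \rho_{\text{max}}$ from Lemma \ref{lm:galeao02}(c) produces \eqref{eq:lete02}. The particular coefficient $4\ell_i^2$ in line \ref{ln:line510} is chosen so that halving it matches the factor $2(\rho_i^k\ell_i)^2$ arising from the $\|u+v\|^2$ expansion.

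Finally, (c) is immediate by taking $c_1$ and $c_2$ as the minima over $i$ of the positive coefficients produced in (a) and (b). For (d), Proposition \ref{pr:special}(b) at $p = p^k$ gives $\varphi_k(p^k) = \sum_{i=1}^n \Delta_i^k$, whence summing (c) over $i$ with $c := \min(c_1, c_2)$ yields \eqref{eq:1419}.
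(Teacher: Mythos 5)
Your proof is correct and follows essentially the same route as the paper: dispose of the trivial branch of line \ref{ln:line600}, derive $\Delta_i^k \geq \left[1/\rho_i^k - 1/(4\beta_i) - \ell_i\right]\|x_i^k - G_i z^k\|^2$ for (a) and convert half of it to the $\|y_i^k-w_i^k\|^2$ term via the upper inequality of Lemma \ref{lm:dantzig}, use the polarization identity together with Lemma \ref{lm:lip} and line \ref{ln:line510} for (b), and sum via Proposition \ref{pr:special}(b) for (d). The only cosmetic differences are that in (a) you reach the intermediate bound by a direct Cauchy--Schwarz computation rather than the paper's identity \eqref{eq:djairo} combined with the lower inequality of Lemma \ref{lm:dantzig}, and that you explicitly handle the case where the coefficient $1/\overline{\rho}-1/(4\beta_i)-\ell_i$ is nonpositive, a point the paper leaves implicit.
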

\begin{proof}
First note that from the identity $\inner{a}{b}=(1/2)(\norm{a+b}^2-\norm{a}^2-\norm{b}^2)$  we find
\begin{align}
\nonumber
\inner{\underbrace{x_i^k-G_iz^k}_{a}}{\underbrace{\rho_i^k(y_i^k-w_i^k)}_{b}} + 
\dfrac{\rho_i^k}{4\beta_i}\norm{x_i^k-G_iz^k}^2& =
\dfrac{1}{2}\norm{x_i^k-G_iz^k+\rho_i^k(y_i^k-w_i^k)}^2\\
\label{eq:djairo}
 &\hspace{0.1cm} + \left(\dfrac{\rho_i^k}{4\beta_i} - \dfrac{1}{2}\right)\norm{x_i^k-G_iz^k}^2 - \dfrac{1}{2}\norm{\rho_i^k(y_i^k-w_i^k)}^2.
\end{align}

(a) If $(\rho_i^k, x_i^k, y_i^k)$ is as in line \ref{ln:line600} of Algorithm \ref{alg_main}, then the result is trivial.
Take $i\in \{1,\dots, n\}\backslash \mathcal{I}_{\mathcal{D}}$. From line \ref{ln:line513} of Algorithm \ref{alg_main}, \eqref{eq:def.Delta}, \eqref{eq:djairo}, Assumption (A3) and the first inequality in \eqref{eq:carmo},
\begin{align*}
 \nonumber
 -\rho_i^k \Delta_i^k &= \dfrac{1}{2}\norm{\rho_i^k [B_i(x_i^k)-B_i(G_iz^k)]}^2
+ \left(\dfrac{\rho_i^k}{4\beta_i} - \dfrac{1}{2}\right)\norm{x_i^k-G_iz^k}^2 
 - \dfrac{1}{2}\norm{\rho_i^k(y_i^k-w_i^k)}^2\\
\nonumber
  & \leq \left[\dfrac{(\rho_i^k \ell_i )^2}{2}
+ \left(\dfrac{\rho_i^k}{4\beta_i} - \dfrac{1}{2}\right)\right]\norm{x_i^k-G_iz^k}^2 
 - \dfrac{1}{2}\norm{\rho_i^k(y_i^k-w_i^k)}^2\qquad \mbox{[by Assumption (A3)]}\\
 \nonumber
 & \leq \left[\dfrac{(\rho_i^k \ell_i )^2}{2}
+ \left(\dfrac{\rho_i^k}{4\beta_i} - \dfrac{1}{2}\right)\right]\norm{x_i^k-G_iz^k}^2 
 - \dfrac{(1-\rho_i^k\ell_i)^2}{2}\norm{x_i^k-G_iz^k}^2\qquad \mbox{[by \eqref{eq:conway} and \eqref{eq:carmo}]}\\
&= \left[\rho_i^k\left(\dfrac{1}{4\beta_i}+\ell_i\right) -1\right]\norm{x_i^k-G_iz^k}^2,
\end{align*}
which in turn (after division by $-\rho_i^k$) yields
\begin{align}
 \nonumber
 \Delta_i^k&\geq \left[\dfrac{1}{\rho_i^k}-\left(\dfrac{1}{4\beta_i}+\ell_i\right)\right]\norm{x_i^k-G_iz^k}^2\\
 \label{eq:minion}
    &\geq  \left[\dfrac{1}{\overline{\rho}}-\left(\dfrac{1}{4\beta_i}+\ell_i\right)\right]\norm{x_i^k-G_iz^k}^2.
\end{align}
On the other hand, using Assumption (A8),  \eqref{eq:minion} combined with the second inequality in \eqref{eq:carmo} gives
\begin{align}
 \label{eq:minion02}
 \Delta_i^k\geq \left[\dfrac{1}{\overline{\rho}}-\left(\dfrac{1}{4\beta_i}+\ell_i\right)\right]
 \left(\dfrac{\underline{\rho}}{1+\overline{\rho}\ell_i}\right)^2\norm{y_i^k-w_i^k}^2.
\end{align}
Note now that \eqref{eq:lete} follows by adding the inequalities in \eqref{eq:minion} and \eqref{eq:minion02}.

(b) Note first that if $(\rho_i^k,x_i^k,y_i^k)$ is as in line \ref{ln:line600}, then, in particular, 
$x_i^k=G_iz^k$ and $y_i^k=w_i^k$, which gives that \eqref{eq:lete02} 
follows trivially from \eqref{eq:def.Delta} (in this case, both sides of \eqref{eq:lete02} are equal to zero).
In what follows in this proof we assume that $(\rho_i^k,x_i^k,y_i^k)$ is as in lines \ref{ln:line508}--\ref{ln:line510} of Algorithm \ref{alg_main}.

Note that using line \ref{ln:line509} of Algorithm \ref{alg_main}, the inequality 
$\dfrac{1}{2}\norm{a+b}^2\leq \norm{a}^2+\norm{b}^2$, Assumptions (A3) and (A5), and 
Lemma \ref{lm:lip} we obtain
\begin{align}
\nonumber
 \dfrac{1}{2}\norm{x_i^k-G_iz^k+\rho_i^k(y_i^k-w_i^k)}^2 & 
       = \dfrac{1}{2}(\rho_i^k)^2 \| [B_i(x_i^k)-B_i(G_i z^k)] + [D_i(x_i^k) - 
D_{i\,(G_i z^k)}(x_i^k)]\|^2\\
\nonumber
 &\leq (\rho_i^k)^2\left(\norm{B_i(x_i^k)-B_i(G_i z^k)}^2 + \norm{D_i(x_i^k) - D_{i\,(G_i z^k)}(x_i^k)}^2\right)\\
\nonumber
 &\leq (\rho_i^k)^2\left(\ell_i^2\norm{x_i^k-G_iz^k}^2+\dfrac{m_i^2}{4}\norm{x_i^k-G_iz^k}^4\right)\\
 \label{eq:djairo02}
 & = \left(\ell_i^2(\rho_i^k)^2+\dfrac{m_i^2(\rho_i^k)^2}{4}\norm{x_i^k-G_iz^k}^2\right)\norm{x_i^k-G_iz^k}^2.
\end{align}

From \eqref{eq:djairo}, \eqref{eq:djairo02}, line \ref{ln:line510} of 
Algorithm \ref{alg_main} (discarding the term $\hat\delta>0$) and \eqref{eq:def.Delta},
\begin{align}
\nonumber
-\rho_i^k\Delta_i^k
\nonumber
& \leq \left[\ell_i^2(\rho_i^k)^2+\dfrac{m_i^2(\rho_i^k)^2}{4}\norm{x_i^k-G_iz^k}^2
 +\left(\dfrac{\rho_i^k}{4\beta_i}-\dfrac{1}{2}\right)\right]\norm{x_i^k-G_iz^k}^2\\
\nonumber
&\hspace{1cm} -\dfrac{1}{2}\norm{\rho_i^k(y_i^k-w_i^k)}^2\\
\nonumber
& = \dfrac{1}{4}\left[4\ell_i^2(\rho_i^k)^2+ \beta_i^{-1}\rho_i^k + m_i^2 (\rho_i^k)^2 
\norm{x_i^k - G_i z^k}^2-2\right]\norm{x_i^k-G_iz^k}^2\\
\nonumber
&\hspace{1cm} -\dfrac{1}{2}\norm{\rho_i^k(y_i^k-w_i^k)}^2\\
\nonumber
&\leq -\dfrac{2-\overline{\theta}}{4}\norm{x_i^k-G_iz^k}^2 -\dfrac{1}{2}\norm{\rho_i^k(y_i^k-w_i^k)}^2.
\end{align}
The desired result follows by dividing the latter inequality by $-\rho_i^k$
and using Lemma \ref{lm:galeao02}(c).

(c) This follows trivially from items (a) and (b).

(d) This follows from (c), \eqref{eq:def.Delta} and Proposition \ref{pr:special}(b).
\end{proof}

\mgap

\begin{lemma}
  \label{lm:previous}
Consider the sequences evolved by \emph{Algorithm \ref{alg_main}} and let $\varphi_k(\cdot)$ and $p^k$ be as in \eqref{eq:def.phi2}
and \eqref{eq:def.pk}, respectively. The following holds:
\begin{itemize}
\item[\emph{(a)}] There exists $\xi>0$ such that $\norm{\nabla \varphi_k}_\gamma\leq \xi$ for all $k\geq 0$.
\item[\emph{(b)}] $G_i z^k - x_i^k \to 0$ and $w_i^k-y_i^k\to 0$\; \emph{(}$i=1,\dots, n$\emph{)}.
\item[\emph{(c)}] $\sum_{i=1}^n\,G_i^* y_i^k\to 0$ and $x_i^k-G_i x_n^k\to 0$\; \emph{(}$i=1,\dots, n-1$\emph{)}.
\end{itemize}
\end{lemma}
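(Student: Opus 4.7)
The plan is to address the three parts sequentially, each leveraging the previously established results.

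For part (a), the natural starting point is the explicit formula from Proposition \ref{pr:special}(a): $\nabla \varphi_k = (\gamma^{-1} v^k, u_1^k, \ldots, u_{n-1}^k)$, so that $\|\nabla\varphi_k\|_\gamma^2 = \pi_k$. Since $v^k = \sum_{i=1}^{n-1} G_i^* y_i^k + y_n^k$ and $u_i^k = x_i^k - G_i x_n^k$, it suffices to verify that each sequence $(x_i^k)$ and $(y_i^k)$ is bounded. Lemma \ref{lm:galeao02}(b) and (d) provide exactly this, and a uniform bound $\xi$ then follows from the boundedness of each $G_i$ (and of $G_i^*$).

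For part (b), I would invoke Proposition \ref{pr:special}(e) to view Algorithm \ref{alg_main} as a particular instance of the generic separator-projection scheme (Algorithm \ref{alg_gen}) applied to $\mathcal{S}$. Combining part (a) with Lemma \ref{lm:alg_gen}(c) then yields $\limsup_{k \to \infty} \varphi_k(p^k) \leq 0$. On the other hand, Proposition \ref{pr:geq}(d) furnishes a constant $c>0$ for which
\[
\varphi_k(p^k) \geq c \sum_{i=1}^n \left[\|x_i^k - G_i z^k\|^2 + \|y_i^k - w_i^k\|^2\right] \geq 0 \qquad \forall k \geq 0.
\]
Squeezing these two estimates forces $\varphi_k(p^k) \to 0$, and hence $G_i z^k - x_i^k \to 0$ and $w_i^k - y_i^k \to 0$ for every $i=1,\dots,n$.

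For part (c), the identity $G_n = I$ from Assumption (A1) is the key. Applying part (b) with $i=n$ gives $x_n^k - z^k \to 0$; continuity of $G_i$ then yields $G_i x_n^k - G_i z^k \to 0$, and subtracting from $x_i^k - G_i z^k \to 0$ produces $x_i^k - G_i x_n^k \to 0$ for $i=1,\dots,n-1$. For the second limit, using $G_n = I$ one has $\sum_{i=1}^n G_i^* y_i^k = v^k$, and I would rewrite
\[
v^k = \sum_{i=1}^{n-1} G_i^*(y_i^k - w_i^k) + (y_n^k - w_n^k) + \Bigl(\sum_{i=1}^{n-1} G_i^* w_i^k + w_n^k\Bigr).
\]
The parenthesized expression vanishes by the definition of $w_n^k$ in line \ref{ln:line511}, while the remaining terms tend to zero by part (b) and the boundedness of each $G_i^*$.

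The only nontrivial step is the squeezing argument in part (b); everything else reduces to bookkeeping once the boundedness from Lemma \ref{lm:galeao02} and the coercivity-type lower bound from Proposition \ref{pr:geq}(d) are available. In particular, no new analytic ingredients beyond those already developed are needed.
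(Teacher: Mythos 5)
Your proposal is correct and follows essentially the same route as the paper: part (a) via the explicit formula for $\nabla\varphi_k$ together with Lemma \ref{lm:galeao02}(b),(d); part (b) by combining Lemma \ref{lm:alg_gen}(c) with the lower bound of Proposition \ref{pr:geq}(d); and part (c) via the identity $\sum_{i=1}^n G_i^* w_i^k=0$ and the decomposition $x_i^k-G_i x_n^k=(x_i^k-G_iz^k)+G_i(z^k-x_n^k)$. No gaps.
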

\begin{proof}
(a) From \eqref{eq:browder2}, \eqref{eq:def.inner} and lines \ref{ln:line501}--\ref{ln:line503} of Algorithm \ref{alg_main},
\begin{align*}
\norm{\nabla \varphi_k}_\gamma^2 &= \gamma^{-1}\norm{v^k}^2 + \sum_{i=1}^{n-1}\norm{u_i^k}^2\\
   &=\gamma^{-1}\left\|\sum_{i=1}^n\,G_i^* y_i^k \right\|^2 + \sum_{i=1}^{n-1}\norm{x_i^k-G_i x_n^k}^2\\
   &\leq n\gamma^{-1}\sum_{i=1}^n\,\norm{G_i^*}^2\norm{y_i^k}^2 + 2\sum_{i=1}^{n-1}\, \left(\norm{x_i^k}^2+\norm{G_i}^2    
   \norm{x_n^k}^2\right),
\end{align*}
which combined with Lemma \ref{lm:galeao02}(items (b) and (d)) yields the desired result.

(b) Using Proposition \ref{pr:special}(e), item (a) and Lemma \ref{lm:alg_gen}(c) we obtain $\overline{\lim}\varphi_k(p^k)\leq 0$, which,
in turn, combined with Proposition \ref{pr:geq}(d) yields the desired result.

(c) Using the fact that $\sum_{i=1}^n G_i^* w_i^k=0$ (see line \ref{ln:line511} in Algorithm \ref{alg_main}) we obtain
\begin{align}
  \label{eq:1355}
  \left\|\sum_{i=1}^n\,G_i^* y_i^k\right\| = \left\|\sum_{i=1}^n\,G_i^*(w_i^k - y_i^k)\right\|\leq \sum_{i=1}^n\,
  \norm{G_i^*}\norm{w_i^k-y_i^k}.
\end{align}
Moreover,
\begin{align}
 \label{eq:1312}
  \nonumber
 \norm{x_i^k-G_i x_n^k}&= \norm{x_i^k - G_iz^k + G_i(z^k- x_n^k)}\\
   &\leq \norm{x_i^k - G_iz^k}+\norm{G_i}\norm{z^k- x_n^k} .     
\end{align}
The desired result now follows form \eqref{eq:1355}, \eqref{eq:1312}, item (b) and the fact that $G_n=I$ (see Assumption (A1)).
\end{proof}

\mgap

\begin{theorem}[Weak convergence of Algorithm \ref{alg_main}]
 \label{th:main}
  Consider the sequences evolved by \emph{Algorithm \ref{alg_main}}, let $\mathcal{S}$ be as in \eqref{eq:def.S}
  and assume that Assumptions \emph{(A1)-(A8)} hold.
   Let $p^k:=(z^k, w_1^k,\dots, w_{n-1}^k)$ and let, for all $k\geq 0$,  
\begin{align}
  \label{eq:def.wn5}
   w_n^k:=-\sum_{i=1}^{n-1}\,G_i^* w_i^{k}.
   \end{align}
  Then $(p^k)$ converges weakly to some element 
  $\overline{p}=(\overline{z}, \overline{w}_1,\dots \overline{w}_{n-1})\in \mathcal{S}$, i.e.,
\begin{align}
z^k \rightharpoonup \overline{z}\;\;\mbox{and}\;\;w_i^k\rightharpoonup \overline{w}_i\quad (i=1,\dots, n),
\end{align}
where $\overline{w}_n:=-\sum_{i=1}^{n-1}\,G_i^*\overline{w}_i$,
and
\begin{align}
 \overline{w}_i\in T_i(G_i\overline{z})\qquad (i=1,\dots, n).
\end{align}
Moreover, 
\begin{align}
 x_i^k \rightharpoonup G_i\overline{z}\;\;\mbox{and}\;\; y_i^k\rightharpoonup \overline{w}_i\qquad (i=1,\dots, n).
\end{align}
\end{theorem}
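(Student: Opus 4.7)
The plan is to invoke Lemma \ref{lm:alg_gen}(b) through Proposition \ref{pr:special}(e), reducing the weak convergence of $(p^k)$ to showing that every weak cluster point lies in $\mathcal{S}$. Boundedness of $(p^k)$ (Proposition \ref{pr:special}(f)) ensures such cluster points exist, so I would fix an arbitrary weakly convergent subsequence $p^{k_j} \rightharpoonup \overline{p} = (\overline{z}, \overline{w}_1, \dots, \overline{w}_{n-1})$ and set $\overline{w}_n := -\sum_{i=1}^{n-1} G_i^* \overline{w}_i$. The strong convergences $G_i z^k - x_i^k \to 0$ and $w_i^k - y_i^k \to 0$ from Lemma \ref{lm:previous}(b), combined with the weak continuity of the bounded linear operators $G_i$ and $G_i^*$, yield
\begin{align*}
 x_i^{k_j} \rightharpoonup G_i \overline{z} \quad \text{and} \quad y_i^{k_j} \rightharpoonup \overline{w}_i \qquad (i=1,\dots,n),
\end{align*}
the case $i=n$ being handled by $w_n^k = -\sum_{i=1}^{n-1} G_i^* w_i^k$ and $G_n = I$.

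The key step is to pass to the limit in Lemma \ref{lm:fund}. Parameterizing $(r_i, y_i) \in \mathrm{Gr}(T_i)$ via $y_i = a_i + (B_i + C_i + D_i)(r_i)$ with $(r_i, a_i) \in \mathrm{Gr}(A_i)$, that lemma reads
\begin{align*}
 \langle r_i - x_i^k,\ y_i^k - y_i \rangle \leq \frac{1}{4\beta_i}\norm{x_i^k - G_i z^k}^2 \qquad \forall (r_i, y_i) \in \mathrm{Gr}(T_i).
\end{align*}
I would sum this over $i = 1,\dots,n$ and take the limit along $k_j$. All linear terms pass to their weak limits; the only troublesome contribution is the bilinear sum $\sum_i \langle x_i^{k_j}, y_i^{k_j} \rangle$, which I would resolve via the identity
\begin{align*}
 \sum_{i=1}^n \langle x_i^{k_j}, y_i^{k_j} \rangle = \sum_{i=1}^n \langle x_i^{k_j} - G_i z^{k_j},\ y_i^{k_j} \rangle + \Bigl\langle z^{k_j},\ \sum_{i=1}^n G_i^* y_i^{k_j} \Bigr\rangle.
\end{align*}
The first sum tends to $0$ by Lemma \ref{lm:previous}(b) and boundedness of $(y_i^k)$; the second does so by Lemma \ref{lm:previous}(c) and boundedness of $(z^k)$. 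The same identity at the limit gives $\sum_i \langle G_i \overline{z}, \overline{w}_i \rangle = \langle \overline{z}, \sum_i G_i^* \overline{w}_i \rangle = 0$, by construction of $\overline{w}_n$. Combining everything yields
\begin{align*}
 \sum_{i=1}^n \langle r_i - G_i \overline{z},\ y_i - \overline{w}_i \rangle \geq 0 \qquad \forall (r_i, y_i) \in \mathrm{Gr}(T_i),\ i=1,\dots,n.
\end{align*}

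The main obstacle is to promote this \emph{summed} inequality into the individual memberships $\overline{w}_i \in T_i(G_i \overline{z})$, and I would resolve it via a product-space argument. Each $T_i = A_i + B_i + C_i + D_i$ is maximal monotone (as noted in Section \ref{sec:alg} under (A2)--(A5)), hence the Cartesian product $T_1 \times \cdots \times T_n$ is maximal monotone on $\HH_1 \times \cdots \times \HH_n$ with the product inner product. The display above is precisely the Minty certificate showing that the pair $\bigl((G_1 \overline{z},\dots,G_n \overline{z}),\,(\overline{w}_1,\dots,\overline{w}_n)\bigr)$ belongs to the graph of this product operator, forcing $\overline{w}_i \in T_i(G_i \overline{z})$ for every $i$ and thus $\overline{p} \in \mathcal{S}$. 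Lemma \ref{lm:alg_gen}(b) then yields weak convergence of $(p^k)$ to some $\overline{p} \in \mathcal{S}$, and the extra conclusions $x_i^k \rightharpoonup G_i \overline{z}$ and $y_i^k \rightharpoonup \overline{w}_i$ on the full sequence follow at once from Lemma \ref{lm:previous}(b) together with the defining identities for $w_n^k$ and $\overline{w}_n$.
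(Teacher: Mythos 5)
Your proof is correct, and while it follows the paper's overall skeleton --- reduce to showing that every weak cluster point lies in $\mathcal{S}$ via Proposition \ref{pr:special}(e) and Lemma \ref{lm:alg_gen}(b), feed Lemma \ref{lm:fund} with arbitrary graph points, pass to the limit along a weakly convergent subsequence, and invoke maximal monotonicity of a product operator --- it diverges from the paper at the key limit-passing step. The paper reformulates the target inclusion as $(-G^*w^\infty,w^\infty)\in(R\times S)(z^\infty,Gz^\infty)$ with $R=T_n$ and $S=T_1\times\cdots\times T_{n-1}$, and handles the troublesome bilinear term $\inner{(r^k,s^k)}{(u^k,v^k)}$ by splitting it along the orthogonal decomposition into $V$ and $V^\perp$, where $V$ is the graph of $G=(G_1,\dots,G_{n-1})$, using the explicit projection formulas of Lemma \ref{lm:pro_nuc} together with $P(u^k,v^k)\to 0$ and $P^\perp(r^k,s^k)\to 0$. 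You instead keep the full product $T_1\times\cdots\times T_n$ and dispose of the bilinear term directly via the identity $\sum_i\inner{x_i^k}{y_i^k}=\sum_i\inner{x_i^k-G_iz^k}{y_i^k}+\inner{z^k}{\sum_iG_i^*y_i^k}$, both pieces of which vanish by Lemma \ref{lm:previous} and boundedness, with the missing constant $\sum_i\inner{G_i\overline z}{\overline w_i}=0$ supplied by the definition of $\overline w_n$ and $G_n=I$. The two computations amount to the same bookkeeping, but your version is more elementary: it bypasses Lemma \ref{lm:pro_nuc} and the weak continuity of the projections entirely. In a final write-up you should make explicit that the Minty argument needs the limit inequality to hold for all \emph{independently} chosen $(r_i,y_i)\in\mathrm{Gr}(T_i)$ --- which it does, since Lemma \ref{lm:fund} is applied componentwise before summing --- and you should cite the maximality of a Cartesian product of maximal monotone operators (e.g.\ \cite{bau.com-book}), exactly as the paper does for $R\times S$.
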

\begin{proof}
We will apply Proposition \ref{pr:special}(e) and Lemma \ref{lm:alg_gen}(b).
Hence, we have to show that every weak cluster point of $p^k=(z^k, w_1^k,\dots, w_{n-1}^k)$ 
belongs to $\mathcal{S}$.
Let $p^\infty=(z^\infty,w_1^\infty, \dots, w_{n-1}^\infty)$ be a weak cluster point of $(p^k)$
and define
 \begin{align}
   \label{eq:def.wn2}
    w^\infty = (w_1^\infty,\dots, w_{n-1}^\infty)\;\;\mbox{and}\;\;
     w_n^\infty=-\sum_{i=1}^{n-1}\,G_i^*w_i^\infty.
 \end{align}
Our goal is to show that $p^\infty\in \mathcal{S}$, i.e.,
\begin{align}
  \label{eq:marina}
 w_i^\infty\in T_i(G_i z^\infty)\qquad (i=1,\dots, n).
\end{align}
(recall that $G_n=I$). To this end, first consider $R:\HH_n=\HH_0 \tos \HH_n=\HH_0$ and 
$S:\HH_1\times \dots \times \HH_{n-1}\tos \HH_1\times \dots \times \HH_{n-1}$ the maximal monotone operators defined as
\begin{align}
 \label{eq:def.ab}
 R = T_n\;\;\mbox{and}\;\; S=T_1\times \dots \times T_{n-1}.
\end{align}
Consider also the (bounded) linear operator $G:\HH_n=\HH_0\to \HH_1\times \dots \times \HH_{n-1}$ defined by
\begin{align}
 \label{eq:def.g}
 G z= (G_1 z, \dots, G_{n-1} z)\qquad (z\in \HH_n=\HH_0)
\end{align}
and let $G^*:\HH_1\times \dots \times \HH_{n-1}\to \HH_n=\HH_0$, 
\begin{align}
  \label{eq:adj.g}
  G^*(w_1,\dots, w_{n-1})= \sum_{i=1}^{n-1}\,G_i^*w_i\qquad \left((w_1,\dots, w_{n-1})\in \HH_1\times \dots \times \HH_{n-1}\right)
\end{align}
be its adjoint operator.
In view of \eqref{eq:def.ab}, \eqref{eq:def.wn2}, \eqref{eq:adj.g} and the fact that $G_n=I$, we have that
our goal \eqref{eq:marina} is clearly equivalent to 
\begin{align}
 \label{eq:marina02}
  -G^*w^\infty\in R(z^\infty)\;\;\mbox{and}\;\; w^\infty \in S(Gz^\infty)\;\;\mbox{i.e.}\;\;
   (-G^*w^\infty, w^\infty)\in (R\times S)(z^\infty, Gz^\infty).
\end{align}
Since $R\times S$ is maximal monotone~\cite{bau.com-book}, to prove \eqref{eq:marina02} (and hence our goal \eqref{eq:marina}) it suffices
to show that
\begin{align}
 \label{eq:marina03}
 \inner{(r,s)-(z^\infty, Gz^\infty)}{(u,v)-(-G^*w^\infty, w^\infty)}\geq 0\qquad \forall (u,v)\in (R\times S)(r,s).
\end{align}
Hence, in what follows in this proof,  we will focus on proving that \eqref{eq:marina03} holds. 
Let us start by proving that
\begin{align}
 \label{eq:neumann3}
\inner{(r,s)-(r^k, s^k)}{(u,v)-(u^k, v^k)} + 
 \sum_{i=1}^{n}\,\dfrac{1}{4\beta_i}\norm{x_i^k - G_i z^k}^2\geq 0\qquad \forall (u, v)\in (R\times S)(r,s),
\end{align}
where, for all $k\geq 0$,
\begin{align}
 \label{eq:def.rs}
r^k:=x_n^k,\quad  u^k:=y_n^k,\quad s^k:=(x_1^k,\dots, x_{n-1}^k)\;\;\mbox{and}\;\; v^k:=(y_1^k,\dots, y_{n-1}^k).
\end{align}
To this end, take 
\begin{align}
 \label{eq:rock22}
 (u, v)\in (R\times S)(r,s),\;\;\mbox{i.e.}\;\; u\in R(r)\;\;\mbox{and}\;\; v\in S(s),
\end{align}
where $r,u\in \HH_0$ and $s:=(s_1,\dots, s_{n-1})$, $v:=(v_1,\dots, v_{n-1})\in \HH_1\times \dots \times \HH_{n-1}$.
Using the latter definitions and \eqref{eq:def.rs} we obtain, for all $k\geq 0$,
\begin{align}
 \label{eq:1131}
  \nonumber
\inner{(r,s)-(r^k, s^k)}{(u,v)-(u^k, v^k)} &= \inner{r-r^k}{u-u^k}+\inner{s-s^k}{v-v^k}\\
    & = \inner{r-x_n^k}{u-y_n^k} +  \sum_{i=1}^{n-1}\,\inner{s_i -x_i^k}{v_i - y_i^k}.
\end{align}
In view of \eqref{eq:rock22}, \eqref{eq:def.ab} and the fact that $T_i=A_i+B_i+C_i+D_i$ ($i=1,\dots, n$) we have
\begin{align*}
 u- B_n(r)-C_n(r)-D_n(r) \in A_n(r)\;\;\mbox{and}\;\;v_i - B_i(s_i)-C_i(s_i)-D_i(s_i)\in A_i(s_i) \quad (i=1,\dots, n-1).
\end{align*}
Hence, using Lemma \ref{lm:fund} with $(r_i, a_i):=(s_i, v_i - B_i(s_i)-C_i(s_i)-D_i(s_i))$  ($i=1,\dots, n-1$) 
and $(r_n,a_n):=(r,u-B_n(r)-C_n(r)-D_n(r))$ and some simple algebra we
find
\begin{align*}
 \inner{r-x_n^k}{u - y_n^k}\geq -\dfrac{1}{4\beta_n}\norm{x_n^k-G_n z^k}^2\;\;\mbox{and}\;\;
 \sum_{i=1}^{n-1}\,\inner{s_i -x_i^k}{v_i - y_i^k}\geq -\sum_{i=1}^{n-1}\,\dfrac{1}{4\beta_i}\norm{x_i^k-G_i z^k}^2,
\end{align*}
which, in turn, when combined with \eqref{eq:1131} yields exactly \eqref{eq:neumann3}.

Next we will take limit in \eqref{eq:neumann3} (up to a subsequence) to prove \eqref{eq:marina03} (and hence our goal \eqref{eq:marina}).
To this end, let $(p^{k_j})$ be a subsequence of $p^k=(z^k, w_1^k,\dots, w_{n-1}^k)$ such that $p^{k_j}\rightharpoonup p^\infty=(z^\infty,w_1^\infty, \dots, w_{n-1}^\infty)$. (Recall that $p^\infty$ is a weak cluster point of $(p^k)$.)
 Using \eqref{eq:def.wn5} and \eqref{eq:def.wn2} combined with the assumption that $p^{k_j}\rightharpoonup p^\infty$  we obtain
\begin{align}
 \label{eq:goldfarb}
 z^{k_j}\rightharpoonup z^{\infty}\;\;\;\mbox{and}\;\;\;w_i^{k_j}\rightharpoonup w_i^\infty\qquad (i=1,\dots, n).
\end{align}
From Lemma \ref{lm:previous}(b) and the first limit in \eqref{eq:goldfarb} 
we obtain $x_i^{k_j}- G_i z^{k_j}\to 0$ and $G_i z^{k_j}\wto G_i z^\infty$, respectively, and so
\begin{align}
 \label{eq:broyden}
  x_i^{k_j} = \left(x_i^{k_j}- G_i z^{k_j}\right) + G_i z^{k_j}    
   \rightharpoonup  G_i z^\infty \qquad (i=1,\dots, n).
\end{align}
Analogously, using now the second limit in \eqref{eq:goldfarb} and Lemma \ref{lm:previous}(b) again we also find
\begin{align}
 \label{eq:broyden2}
  y_i^{k_j} = \left(y_i^{k_j}- w_i^{k_j}\right) + w_i^{k_j}   
   \rightharpoonup  w_i^\infty \qquad (i=1,\dots, n).
\end{align}
Using the definitions of $r^k$ and $s^k$  in \eqref{eq:def.rs} combined 
\eqref{eq:broyden} for $i=n$ and for $i=1,\dots, n-1$ (respectively), the fact that $G_n=I$ and \eqref{eq:def.g} we also find
\begin{align}
 \label{eq:nocedal}
 r^{k_j}\rightharpoonup z^\infty\;\;\mbox{and}\;\; s^{k_j}\rightharpoonup G z^\infty\;\;\mbox{i.e.}\;\; 
 (r^{k_j},s^{k_j})\rightharpoonup (z^\infty,G z^\infty).
\end{align}
Likewise, using now the definitions of $u^k$ and $v^k$ in \eqref{eq:def.rs} combined with \eqref{eq:broyden2} for $i=n$ and for $i=1,\dots, n-1$ (respectively), \eqref{eq:def.wn2} and \eqref{eq:adj.g} we obtain 
\begin{align}
 \label{eq:nocedal2}
 u^{k_j}\rightharpoonup -G^* w^\infty\;\;\mbox{and}\;\;  v^{k_j}\rightharpoonup w^\infty
 \;\;\mbox{i.e.}\;\; 
 (u^{k_j},v^{k_j})\rightharpoonup (-G^* w^\infty, w^\infty).
\end{align}
Note now that, from the definitions of $u^k$ and $v^k$ in \eqref{eq:def.rs}, \eqref{eq:adj.g}
and the fact that $G_n=I$ we have $u^k + G^*  v^k= y_n^k+\sum_{i=1}^{n-1}\,G_i^*y_i^k = \sum_{i=1}^{n}\,G_i^*y_i^k$ and so, from Lemma \ref{lm:previous}(c),
\begin{align}
  \label{eq:uvk}
 u^k + G^*  v^k \to 0.
\end{align}
Analogously, from the definitions of $r^k$ and $s^k$ in \eqref{eq:def.rs} and the definition of $G$ in \eqref{eq:def.g} we also find
$G r^k - s^k = (G_1 x_n^k - x_1^k, \dots, G_{n-1} x_n^k - x_{n-1}^k)$ and so, from 
Lemma \ref{lm:previous}(c), 
\begin{align}\label{eq:rsk}
 G r^k - s^k \to 0.
\end{align}

Now consider $V$ the closed linear subspace of $\HHH$ defined as 
$V=\{(z,Gz)\,| z\in \HH_0\}$, i.e, $V$ is the graph of the linear operator $G$, and let $P=P_V$ and $P^\perp=P_{V^\perp}$ denote
the projection operators onto $V$ and $V^\perp$, respectively. 
From Lemma \ref{lm:pro_nuc} we know that, for all $(z,w)\in \HHH$, with $z\in \HH_0$,
\begin{align}
 \begin{aligned}
& P(z, w) = ((I+G^*G)^{-1}(z+G^*w), G(I+G^*G)^{-1}(z+G^*w)),\\
& P^\perp(z,w) = ( G^*(I+GG^*)^{-1}(Gz-w), -(I+GG^*)^{-1}(Gz-w)).
\end{aligned}
\end{align}
Hence, using \eqref{eq:uvk}, \eqref{eq:rsk} as well as the continuity of $(I+G^*G)^{-1}$, $(I+GG^*)^{-1}$ and $G^*$
we find
\begin{align} \label{eq:neumann}
 P(u^k,v^k)\to 0\;\;\mbox{and}\;\; P^\perp(r^k,s^k) \to 0.
\end{align}
On the other hand, using the fact that $P$ and $P^\perp$ are weakly continuous combined with \eqref{eq:nocedal} and \eqref{eq:nocedal2}
we also find
\begin{align}
 \label{eq:neumann2}
  \begin{aligned}
 & P(r^{k_j}, s^{k_j})\rightharpoonup P(z^\infty, Gz^\infty)=(z^\infty, Gz^\infty),\\
 &P^\perp(u^{k_j},v^{k_j})\rightharpoonup
 P^\perp(-G^* w^\infty, w^\infty)=(-G^* w^\infty, w^\infty),
 \end{aligned}
\end{align}
where the latter identities follow from the fact that $(z^\infty, Gz^\infty)\in V$ and $(-G^* w^\infty, w^\infty)\in V^\perp$.

Now, using the fact that $V$ is orthogonal to $V^{\perp}$ and some algebraic manipulations we find that the first term in the right-hand side of
\eqref{eq:neumann3} can be decomposed as
\begin{align}
\label{eq:newton}
\begin{aligned}
\inner{(r,s)-(r^k, s^k)}{(u,v)-(u^k, v^k)}&= \underbrace{\inner{P(r,s)-P(r^k, s^k)}{P(u,v)-P(u^k, v^k)}}_{=:\mu^k}\\[2mm]
 & + \underbrace{\inner{P^\perp(r,s)-P^\perp(r^k, s^k)}{P^\perp(u,v)-P^\perp(u^k, v^k)}}_{=:\nu^k}.
 \end{aligned}
\end{align}
Direct use of \eqref{eq:neumann} with $k=k_j$ and \eqref{eq:neumann2} yields
\begin{align*}
 \mu^{k_j}  \to \inner{P(r,s)-P(z^\infty, Gz^\infty)}{P(u,v)}
 \;\;\mbox{and}\;\;
 \nu^{k_j}\to \inner{P^\perp(r,s)}{P^\perp(u,v)-P^\perp (-G^* w^\infty, w^\infty)}
\end{align*}
and so
\begin{align} \lab{eq:vera}
\mu^{k_j} + \nu^{k_j}\to 
\inner{P(r,s)-P(z^\infty, Gz^\infty)}{P(u,v)} + \inner{P^\perp(r,s)}{P^\perp(u,v)-P^\perp (-G^* w^\infty, w^\infty)}=:\eta.
\end{align}
On the other hand, using the facts that $V$ is orthogonal to $V^\perp$ and $P+P^\perp=I$ we find that the right-hand side of
\eqref{eq:vera} can be written as 
\begin{align*}
\eta 
     &=\inner{P(r,s)-P(z^\infty, Gz^\infty)+P^\perp(r,s)}{P(u,v)+P^\perp(u,v)- P^\perp(-G^* w^\infty, w^\infty)}\\
     &= \inner{(r,s)-P(z^\infty, Gz^\infty)}{(u,v)- P^\perp(-G^* w^\infty, w^\infty)}\\
     &= \inner{(r,s)-(z^\infty, Gz^\infty)}{(u,v)- (-G^* w^\infty, w^\infty)},
\end{align*}
where in the last identity we used that $(z^\infty, Gz^\infty)\in V$ and $(-G^* w^\infty, w^\infty)\in V^\perp$. Hence, from
\eqref{eq:vera},
\begin{align}
\label{eq:miranda}
 \mu^{k_j}+\nu^{k_j}\to \inner{(r,s)- (z^\infty, Gz^\infty)}{(u,v)- (-G^* w^\infty, w^\infty)}.
\end{align}

Note now that we obtain \eqref{eq:marina03} as a direct consequence of \eqref{eq:neumann3}, \eqref{eq:newton}  with $k=k_j$, \eqref{eq:miranda} and Lemma \ref{lm:previous}(b). 
\end{proof}

\section{Bracketing/bisection procedure}
\lab{sec:bbp}

In this section, we present and justify the bracketing/bisection routine (Algorithm \ref{alg_proc}) 
called in line \ref{ln:line515} of 
Algorithm \ref{alg_main}. 

\mgap

\noindent
{\bf Motivation.} The main motivation for introducing a bracketing/bisection procedure in Algorithm \ref{alg_main} 
is as follows: for each $i\in \mathcal{I}_{\mathcal{D}}$, at an iteration $k\geq 0$, we may need to find $\rho_i^k>0$ and $x_i^k$ (see lines \ref{ln:line508} and \ref{ln:line510} of Algorithm \ref{alg_main}) satisfying
\begin{align}
\label{eq:cha}
& x_i^k = J_{\rho_i^k \left(A_i+D_{i \,(G_i z^k)}\right)} \left(G_iz^k + \rho_i^k w_i^k - \rho_i^k  
            (B_i+C_i)(G_i z^k)\right),\\[2mm]
\label{eq:cha2}
&\underline{\theta}\leq 4\,\ell_i^2 (\rho_i^k)^2 + \left(\beta_i^{-1}+\hat \delta\right)\rho_i^k +
            \left(m_i \rho_i^k \norm{x_i^k - G_i z^k}\right)^2\leq \overline{\theta}.
\end{align}
While the computation of $x_i^k$ as in \eqref{eq:cha} follows essentially from the assumption that the resolvent
$J_{\rho_i^k \left(A_i+D_{i \,(G_i z^k)}\right)}$ is computable, the requirement of $\rho_i^k$ and $x_i^k$ satisfying the
coupled system \eqref{eq:cha}-\eqref{eq:cha2} makes the task much more challenging. 

To begin with, let's put \eqref{eq:cha}-\eqref{eq:cha2} in a more general framework. 
The task then is (cf. \eqref{eq:cha}-\eqref{eq:cha2}):

\mgap

\noindent
\fbox{{\bf Problem A}} 
{\it 
Let $\HH$ be a real Hilbert space, 
$A:\HH\tos \HH$ be maximal monotone, $B:\HH\to \HH$ be monotone and $\ell$-Lipschitz continuous, 
$C:\HH\to \HH$ be $\beta$-cocoercive, $D:\HH\to \HH$ be monotone and continuously differentiable with an $m$-Lipschitz continuous derivative $D'(\cdot)$, and $G:\HH\to \HH$ be a bounded linear operator.  

Given $z,w\in \HH$, find $\rho>0$ and $x\in \HH$ satisfying
\begin{align}
\label{eq:cha3}
& x = J_{\rho \left(A +D_{(G z)}\right)} \left(G z + \rho w - \rho(B+C)(G z)\right),\\[2mm]
\label{eq:cha4}
&\underline{\theta}\leq 4\,\ell^2  \rho^2 + \left(\beta^{-1}+\hat \delta\right)\rho +
             (m\rho\norm{x - G z})^2\leq \overline{\theta},
\end{align}
where $\hat\delta>0$, $0<\underline{\theta}<\overline{\theta}$ and
$D_{(Gz)}(x):=D(Gz) + D'(Gz)(x-Gz)$, for all $x\in \HH$.
}

\mgap

Next lemma and corollary will be helpful.

\begin{lemma}\lab{lm:roseli}
Let $A(\cdot)$, $B(\cdot)$, $C(\cdot)$, $D(\cdot)$ and $G(\cdot)$, and $z,w\in \HH$ and $\rho>0$, be as described in \emph{Problem A} above and 
define $\widehat T:\HH\tos \HH$ as
\begin{align}\lab{eq:that}
\widehat T(x) = A(x) + D_{(Gz)}(x) + (B+C)(Gz) -w,\qquad \forall x\in \HH.
\end{align}
The following holds:
\begin{itemize}
\item[\emph{(a)}] $\widehat T$ is maximal monotone.
\item[\emph{(b)}] $0\in \widehat T(Gz)$ if and only if $w\in (A+B+C+D)(Gz)$.
\item[\emph{(c)}] $J_{\rho\widehat T}(Gz) = J_{\rho(A+D_{(Gz)})}(Gz+\rho w -\rho(B+C)(Gz))$.
\end{itemize}
\end{lemma}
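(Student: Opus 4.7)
The plan is to prove the three items essentially by bookkeeping: (a) by combining standard maximal monotonicity results, (b) by a direct substitution, and (c) by unfolding the definition of the resolvent.

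For (a), I would first rewrite $\widehat T(x) = A(x) + L(x) + c$, where $L(x):=D_{(Gz)}(x) = D(Gz) + D'(Gz)(x-Gz)$ and $c:=(B+C)(Gz)-w$ is a constant element of $\HH$. The constant term does not affect maximal monotonicity, so the issue reduces to showing $A+L$ is maximal monotone. Since $D$ is assumed monotone and differentiable at $Gz$, the linear part $D'(Gz)$ is positive semi-definite (taking $h\in\HH$, using $\langle h,D(Gz+th)-D(Gz)\rangle\geq 0$ and dividing by $t^2$, then sending $t\downarrow 0$, yields $\langle h,D'(Gz)h\rangle\geq 0$), hence $L$ is monotone; it is also continuous and defined on all of $\HH$. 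Since $A$ is maximal monotone and $L$ is monotone, continuous, with $\operatorname{Dom}(L)=\HH$, a classical result in monotone operator theory (e.g.\ \cite{bau.com-book}) gives that $A+L$ is maximal monotone, which proves (a).

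For (b), simply evaluate $\widehat T$ at $x=Gz$. Since $D_{(Gz)}(Gz) = D(Gz) + D'(Gz)(Gz-Gz) = D(Gz)$, we have $\widehat T(Gz) = A(Gz) + D(Gz) + (B+C)(Gz) - w$. Therefore $0\in \widehat T(Gz)$ if and only if $w\in A(Gz)+B(Gz)+C(Gz)+D(Gz) = (A+B+C+D)(Gz)$.

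For (c), I would unfold the resolvent: by definition, $\bar x := J_{\rho\widehat T}(Gz)$ is equivalent to $Gz - \bar x\in \rho\widehat T(\bar x)$, i.e., using \eqref{eq:that},
\begin{align*}
Gz - \bar x \in \rho A(\bar x) + \rho D_{(Gz)}(\bar x) + \rho(B+C)(Gz) - \rho w.
\end{align*}
Rearranging the constant terms to the left-hand side gives
\begin{align*}
Gz + \rho w - \rho(B+C)(Gz) - \bar x \in \rho\bigl(A+D_{(Gz)}\bigr)(\bar x),
\end{align*}
which, by definition of the resolvent of $\rho(A+D_{(Gz)})$ (which is well-defined and single-valued since $A+D_{(Gz)}$ is maximal monotone by the argument in (a) applied with $c=0$), is exactly $\bar x = J_{\rho(A+D_{(Gz)})}\bigl(Gz+\rho w - \rho(B+C)(Gz)\bigr)$.

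The only non-routine point will be step (a): to invoke maximality of $A+L$ one must confirm $L$ is monotone (not just affine), which in turn rests on the positive semi-definiteness of $D'(Gz)$ deduced from monotonicity of $D$; the rest is essentially algebraic unfolding.
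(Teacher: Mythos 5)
Your proposal is correct and follows essentially the same route as the paper: the paper's proof of (a) simply cites the maximality of sums of maximal monotone operators \cite{roc-max.tams70}, while (b) and (c) are treated as direct consequences of \eqref{eq:that} and the resolvent definition, exactly as you unfold them. The only added value in your write-up is that you make explicit the verification that $D_{(Gz)}$ is monotone (via positive semi-definiteness of $D'(Gz)$ deduced from monotonicity of $D$), a point the paper leaves implicit.
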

\bproo
(a) The result follows from the definition of $\widehat T$ as in \eqref{eq:that} and the maximality of the sum of maximal monotone operators~\cite{roc-max.tams70} (see also~\cite{bau.com-book}).

(b) This follows trivially from \eqref{eq:that} and \eqref{eq:lin.app}.

(c) This is a direct consequence of \eqref{eq:that} and the definition of resolvents of $\widehat T$ and $A+D_{(Gz)}$.
\eproo

\mgap

\bcoro \lab{cor:cafe}
Let the maximal monotone operator $\widehat T$ be as in \eqref{eq:that}. For $z,w\in \HH$, 
we have that $\rho>0$ and $x\in \HH$ solve \emph{Problem A}, i.e., they satisfy \eqref{eq:cha3}-\eqref{eq:cha4}
if and only if
\begin{align}\lab{eq:cafe}
 \underline{\theta}\leq 4\,\ell^2  \rho^2 + \left(\beta^{-1}+\hat \delta\right)\rho +
             (m\rho\norm{J_{\rho\widehat T}(Gz) - G z})^2\leq \overline{\theta}
\end{align}
and $x=J_{\rho\widehat T}(Gz)$. Consequently, \emph{Problem A} is equivalent to find $\rho>0$ satisfying condition \eqref{eq:cafe}.
\ecoro
\bproo
The proof follows from Lemma \ref{lm:roseli}(c) and \eqref{eq:cha3}-\eqref{eq:cha4}.
\eproo

\mgap

As a direct consequence of Corollary \ref{cor:cafe}, we have that Problem A can be posed in a more general framework as described next . 

\mgap

\noindent
{\bf \fbox{Problem B}}
{\it Given a maximal monotone operator $T$ in a real Hilbert space $\HH$, scalars $a,b\geq 0$, 
$0<\theta_{-}<\theta_{+}<\infty$
and $z\in \HH$, find $\rho>0$ such that
\begin{align}
 \label{eq:sorvete}
 \theta_{-}\leq \psi(\rho, z)\leq \theta_{+},
\end{align}
where 
 \begin{align}
   \label{eq:sorvete2}
 \psi(\rho, z):=a \rho^2 + b\rho + (\rho\norm{J_{\rho T}(z)-z})^2.
 \end{align}
}

\mgap

\begin{proposition}\lab{pr:roseli}
\emph{Problem A} is a special case of \emph{Problem B}. As a consequence, any solution method designed to solve 
\emph{Problem B} can also be applied to solve \emph{Problem A}.
\end{proposition}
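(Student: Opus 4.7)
The plan is to exploit Corollary \ref{cor:cafe}, which already recasts Problem A as the task of finding a scalar $\rho>0$ satisfying condition \eqref{eq:cafe}, with $x$ then recovered via the single assignment $x=J_{\rho\widehat T}(Gz)$. In other words, the implicit coupling between $\rho$ and $x$ in \eqref{eq:cha3}--\eqref{eq:cha4} has already been absorbed into a single scalar inequality involving only $\rho$ and the maximal monotone operator $\widehat T$ from \eqref{eq:that}. So the work left for the proof is purely a matter of bookkeeping: match \eqref{eq:cafe} to the template \eqref{eq:sorvete}--\eqref{eq:sorvete2}.

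With that reduction in hand, I would next observe that in the setting of Algorithm \ref{alg_main} this routine is only invoked for indices $i\in \mathcal{I}_{\mathcal{D}}$, and Assumption (A7) guarantees that the parameter playing the role of $m$ satisfies $m>0$. Hence I can factor $m^2$ out of the last term of \eqref{eq:cafe} and divide the whole chain of inequalities by $m^2$, rewriting \eqref{eq:cafe} as
\[
\dfrac{\underline{\theta}}{m^2}\;\leq\;\dfrac{4\ell^2}{m^2}\,\rho^2 + \dfrac{\beta^{-1}+\hat\delta}{m^2}\,\rho + \bigl(\rho\,\norm{J_{\rho\widehat T}(Gz)-Gz}\bigr)^2\;\leq\;\dfrac{\overline{\theta}}{m^2}.
\]
This is precisely condition \eqref{eq:sorvete}--\eqref{eq:sorvete2} of Problem B under the identifications $T:=\widehat T$, $z_{\mbox{\scriptsize B}}:=Gz$, $a:=4\ell^2/m^2$, $b:=(\beta^{-1}+\hat\delta)/m^2$, $\theta_{-}:=\underline{\theta}/m^2$ and $\theta_{+}:=\overline{\theta}/m^2$. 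I would then verify each hypothesis of Problem B in turn: maximality of $\widehat T$ is supplied by Lemma \ref{lm:roseli}(a); the sign requirements $a,b\geq 0$ are immediate since $\ell,\beta^{-1},\hat\delta\geq 0$; and $0<\theta_{-}<\theta_{+}<\infty$ follows from the standing hypothesis $0<\underline{\theta}<\overline{\theta}$ in Problem A.

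The conclusion of the proposition is then immediate: any procedure that produces a $\rho>0$ solving Problem B for the data just listed also solves the rescaled \eqref{eq:cafe}, and hence, via Corollary \ref{cor:cafe} together with the assignment $x:=J_{\rho\widehat T}(Gz)$, yields a solution to Problem A. I do not expect a genuine obstacle here; the substantive content has already been delivered by Lemma \ref{lm:roseli} and Corollary \ref{cor:cafe}, which decoupled the search for $x$ from the search for $\rho$ and introduced the operator $\widehat T$ that absorbs the forward evaluations and the right-hand side $w$. The only point that requires a brief comment is the use of $m>0$ to legitimize the division by $m^2$, which is precisely what restricts the invocation of this routine to indices in $\mathcal{I}_{\mathcal{D}}$.
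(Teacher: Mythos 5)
Your proposal is correct and follows essentially the same route as the paper: invoke Corollary \ref{cor:cafe} to reduce Problem A to the scalar condition \eqref{eq:cafe}, then divide by $m^2$ to match the template \eqref{eq:sorvete}--\eqref{eq:sorvete2} with $T=\widehat T$, $z\leftarrow Gz$, $a=4\ell^2/m^2$, $b=(\beta^{-1}+\hat\delta)/m^2$ and $\theta_{\pm}=\overline{\theta}/m^2,\underline{\theta}/m^2$. Your explicit remark that $m>0$ (guaranteed by Assumption (A7) for $i\in\mathcal{I}_{\mathcal{D}}$) is needed to legitimize the division is a small but worthwhile addition that the paper leaves implicit.
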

\bproo
Indeed, by Corollary \ref{cor:cafe} we know that Problem A is equivalent to the problem of finding $\rho>0$ satisfying
\eqref{eq:cafe}.
Now, note that by simply dividing \eqref{eq:cafe} by $m^2$, we see that it is clearly equivalent to 
\eqref{eq:sorvete}-\eqref{eq:sorvete2} with $T=\widehat T$ and
\begin{align*}
 \theta_{-} \leftarrow \underline{\theta}/m^2,\quad a \leftarrow 4\ell^2/m^2,\quad
 b \leftarrow  (\beta^{-1}+\hat\delta)/m^2,\quad z\leftarrow Gz\;\;\mbox{and}\quad 
 \theta_{+}\leftarrow \overline{\theta}/m^2.
\end{align*}
\eproo

\mgap

\begin{center}
\emph{Based on the discussion in the first paragraph of this section (see ``Motivation'') and Proposition \ref{pr:roseli}, we conclude that it is sufficient for our purposes to consider \emph{Problem B}.}
\end{center}

\mgap

In order to achieve \eqref{eq:sorvete}, we need to study the function $\psi(\cdot, \cdot)$ as in \eqref{eq:sorvete2} in more details. The following lemma of Monteiro and Svaiter will be helpful to us.

\begin{lemma}\emph{(\cite[Lemma 4.3]{mon.sva-newton.siam12})}
 \label{lm:mon.sva}
 Let $\HH$ be a real Hilbert space and $T:\HH\tos \HH$ be maximal monotone.
 The following holds for $\varphi(\rho,z):=\rho\norm{J_{\rho T}(z)-z}$, where $\rho>0$ and $z\in \HH$.
 \begin{itemize}
  \item[\emph{(a)}] $0<\rho\mapsto \varphi(\rho,z)$ is a continuous function.
  \item[\emph{(b)}] For every $0<\mu<\rho$,
  \begin{align}
    \label{eq:1300}
   \dfrac{\rho}{\mu} \varphi(\mu,z)\leq \varphi(\rho,z) \leq \left(\dfrac{\rho}{\mu}\right)^2 \varphi(\mu,z).
  \end{align}
 \end{itemize}
\end{lemma}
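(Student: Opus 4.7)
The plan is to address items (a) and (b) separately. For (a), continuity of $\varphi(\rho,z) = \rho\,\norm{J_{\rho T}(z) - z}$ in $\rho > 0$ reduces to continuity of the map $\rho \mapsto J_{\rho T}(z)$, which is a classical property of resolvents of maximal monotone operators in Hilbert space. A self-contained derivation can be obtained by combining the resolvent identity $J_{\rho T}(z) = J_{\mu T}\bigl(\tfrac{\mu}{\rho} z + (1-\tfrac{\mu}{\rho}) J_{\rho T}(z)\bigr)$ with the nonexpansiveness of $J_{\mu T}$ (for a fixed $\mu > 0$) to conclude $\norm{J_{\rho T}(z) - J_{\mu T}(z)} \to 0$ as $\rho \to \mu$.

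For (b), I would exploit the monotonicity of $T$ directly on the defining inclusions of the two resolvents. Setting $x_\mu := J_{\mu T}(z)$, $x_\rho := J_{\rho T}(z)$, $p := z - x_\mu$ and $q := z - x_\rho$, we have $p/\mu \in T(x_\mu)$ and $q/\rho \in T(x_\rho)$, so monotonicity of $T$ yields $\inner{x_\mu - x_\rho}{p/\mu - q/\rho} \geq 0$. Since $x_\mu - x_\rho = q - p$, multiplying this inequality by $\mu\rho$ and expanding gives the key inequality
\[
(\rho+\mu)\inner{p}{q} \geq \rho\norm{p}^2 + \mu\norm{q}^2.
\]
Applying Cauchy--Schwarz on the left-hand side and rearranging produces the factored form $(\rho\norm{p} - \mu\norm{q})(\norm{p} - \norm{q}) \leq 0$.

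The main obstacle (though still short) is extracting both halves of \eqref{eq:1300} from this factored form under the assumption $0 < \mu < \rho$. A quick case analysis rules out $\norm{p} > \norm{q}$, which would force $\rho\norm{p} \leq \mu\norm{q}$ and contradict $\rho > \mu$, so $\norm{p} \leq \norm{q}$; the factored inequality then forces $\mu\norm{q} \leq \rho\norm{p}$. The degenerate case $\norm{q} = 0$ is handled separately: it implies $0 \in T(z)$, whence $x_\mu = x_\rho = z$ and both inequalities in \eqref{eq:1300} reduce to $0 = 0$. Finally, rewriting $\norm{p} \leq \norm{q}$ and $\mu\norm{q} \leq \rho\norm{p}$ as $\norm{x_\mu - z} \leq \norm{x_\rho - z}$ and $\mu\norm{x_\rho - z} \leq \rho\norm{x_\mu - z}$, and multiplying each by $\rho/\mu$, yields the lower and upper bounds in \eqref{eq:1300}, respectively.
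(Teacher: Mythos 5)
The paper does not actually prove this lemma: it is imported verbatim from \cite[Lemma 4.3]{mon.sva-newton.siam12} and used as a black box, so there is no in-paper argument to compare yours against. Your proof is a correct self-contained derivation, and it follows the standard route one finds in the cited source: write $p/\mu\in T(J_{\mu T}(z))$ and $q/\rho\in T(J_{\rho T}(z))$ with $p:=z-J_{\mu T}(z)$, $q:=z-J_{\rho T}(z)$, apply monotonicity to get $(\rho+\mu)\inner{p}{q}\geq\rho\normq{p}+\mu\normq{q}$, and combine with Cauchy--Schwarz to obtain $(\rho\norm{p}-\mu\norm{q})(\norm{p}-\norm{q})\leq 0$; the two inequalities $\norm{p}\leq\norm{q}$ and $\mu\norm{q}\leq\rho\norm{p}$ are precisely the monotonicity of $\rho\mapsto\norm{J_{\rho T}(z)-z}$ and of $\rho\mapsto\norm{J_{\rho T}(z)-z}/\rho$ (the latter nonincreasing), which rescale to \eqref{eq:1300}. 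Part (a) via the resolvent identity plus nonexpansiveness of $J_{\mu T}$ is also standard and correct, provided you note (as your identity readily gives) that $\norm{J_{\rho T}(z)-z}$ stays bounded as $\rho\to\mu$. One small imprecision: in the boundary case $\norm{p}=\norm{q}>0$ the factored inequality is $0\leq 0$ and does not by itself ``force'' $\mu\norm{q}\leq\rho\norm{p}$; that inequality instead follows directly from $\mu<\rho$. This is trivially repaired and does not affect the validity of the argument.
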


\mgap

Next we prove a similar result for the function $\psi(\cdot,\cdot)$.
\mgap

\begin{lemma}
 \label{lm:mon.sva.gen}
 Let $\psi(\cdot,\cdot)$ be as in \eqref{eq:sorvete2}, i.e., let $\psi(\rho,z):= a\rho^2 + b\rho + \left[\varphi(\rho, z)\right]^2$,
 where $\rho>0$, $z\in \HH$, $a,b\geq 0$ and $\varphi(\rho, z)$ is as in \emph{Lemma \ref{lm:mon.sva}}. If $0<\mu<\rho$, then
\begin{align}
 \label{eq:1103}
 \dfrac{\rho}{\mu}\psi(\mu, z)\leq \psi(\rho, z)\leq \left(\dfrac{\rho}{\mu}\right)^4\psi(\mu, z).
\end{align}
\end{lemma}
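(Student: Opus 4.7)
The plan is to reduce the claim to term-by-term estimates on the three pieces of $\psi(\rho,z) = a\rho^2 + b\rho + [\varphi(\rho,z)]^2$, exploiting Lemma \ref{lm:mon.sva} for the $\varphi$-piece and using only the trivial inequality $1 \leq t \leq t^4$ (valid for $t:=\rho/\mu > 1$) for the $a\rho^2$ and $b\rho$ pieces.

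More precisely, set $t := \rho/\mu > 1$. For the lower bound in \eqref{eq:1103}, I would observe that $a\rho^2 = t^2(a\mu^2) \geq t(a\mu^2)$ and $b\rho = t(b\mu)$, while squaring the left inequality in \eqref{eq:1300} and then dropping one factor of $t$ gives $[\varphi(\rho,z)]^2 \geq t^2[\varphi(\mu,z)]^2 \geq t[\varphi(\mu,z)]^2$. Summing the three inequalities yields $\psi(\rho,z) \geq t\,\psi(\mu,z)$, which is exactly the left-hand inequality of \eqref{eq:1103}.

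For the upper bound, the same idea works with $t^4$ in place of $t$: since $t > 1$, we have $a\rho^2 = t^2(a\mu^2) \leq t^4(a\mu^2)$ and $b\rho = t(b\mu) \leq t^4(b\mu)$, while squaring the right inequality in \eqref{eq:1300} gives $[\varphi(\rho,z)]^2 \leq t^4[\varphi(\mu,z)]^2$. Summing produces $\psi(\rho,z) \leq t^4\,\psi(\mu,z)$, which is the right-hand inequality in \eqref{eq:1103}.

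I do not anticipate any real obstacle here: the bound from Lemma \ref{lm:mon.sva} carries the entire weight of the argument, and the only care needed is to use the weakest power of $t$ on the lower side ($t$) and the strongest on the upper side ($t^4$) so that a single common factor can be pulled out across all three summands; the assumption $a,b \geq 0$ is what makes the term-by-term comparison legitimate.
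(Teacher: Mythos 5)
Your proof is correct and follows essentially the same route as the paper: a term-by-term comparison in which Lemma \ref{lm:mon.sva} controls the $[\varphi(\cdot,z)]^2$ piece and the elementary bounds $t\le t^2\le t^4$ for $t=\rho/\mu>1$ handle the $a\rho^2$ and $b\rho$ pieces (the paper phrases the lower bound by subtracting $b$ and comparing $\psi(\cdot,z)/\rho$, but that is the same estimate). Nothing is missing.
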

\begin{proof}
Note first that
\begin{align*}
\dfrac{\psi(\mu, z)}{\mu} - b&= a\mu+\mu\left(\dfrac{\varphi(\mu,z)}{\mu}\right)^2\\
                   & \leq a\rho+\rho\left(\dfrac{\varphi(\mu,z)}{\mu}\right)^2 \qquad [\mu<\rho]\\
                   & \leq a\rho+\rho\left(\dfrac{\varphi(\rho,z)}{\rho}\right)^2\qquad [\mbox{by the first inequality in \eqref{eq:1300}}]\\
                   & = \dfrac{\psi(\rho, z)}{\rho}-b,
\end{align*}
which clearly gives the first inequality in \eqref{eq:1103}.
Note now that it is trivial to check the inequality
\begin{align}
 \label{eq:1305}
  a\rho^2 + b\rho \leq \left(\dfrac{\rho}{\mu}\right)^4 (a\mu^2 + b\mu).
\end{align}
Hence,
\begin{align*}
\psi(\rho,z)&= a\rho^2 + b\rho + \left[\varphi(\rho, z)\right]^2\\
   &\leq \left(\dfrac{\rho}{\mu}\right)^4 \left(a\mu^2 + b\mu\right)+ \left[\varphi(\rho, z)\right]^2\qquad \mbox{[by \eqref{eq:1305}]}\\
   &\leq \left(\dfrac{\rho}{\mu}\right)^4\left( a\mu^2 + b\mu+ \left[\varphi(\mu, z)\right]^2\right)
                \qquad \mbox{[by the second inequality in \eqref{eq:1300}]}\\
                &= \left(\dfrac{\rho}{\mu}\right)^4 \psi(\mu, z).
\end{align*}
\end{proof}

\mgap

\begin{lemma}
 \label{lm:mon.sva.gen02}
 Let $\psi(\cdot, \cdot)$ be as in \eqref{eq:sorvete2} and let $z\in \HH$ be such that 
 $0\notin T(z)$. The following holds:
 \begin{itemize}
 \item[\emph{(a)}] $\psi(\rho, z)>0$ for every $\rho>0$.
 \item[\emph{(b)}] $0<\rho\mapsto \psi(\rho, z)$ is a strictly increasing and continuous function, which goes to $0$ 
 or $\infty$, as
 $\rho$ tends to $0$ or $\infty$, respectively.
\item[\emph{(c)}] For any $0<\theta_{-}<\theta_{+}<\infty$, the set of all $\rho>0$ satisfying
\[
 \theta_{-}\leq \psi(\rho, z)\leq \theta_+
\]
is a closed interval $[\rho_{-},\rho_+]$, where
\begin{align}
 \label{eq:rhom}
  \theta_{-}=\psi(\rho_{-},z)\;\;\mbox{and}\;\; \theta_{+}=\psi(\rho_{+},z),
\end{align}
such that
\[
 \dfrac{\rho_+}{\rho_{-}}\geq \sqrt[4]{\dfrac{\theta_+}{\theta_{-}}}.
\]
 \end{itemize}
\end{lemma}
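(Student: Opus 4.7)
The plan is to derive (a) directly from the assumption $0\notin T(z)$, and then to obtain (b) and (c) as essentially mechanical consequences of the two-sided estimate \eqref{eq:1103} from Lemma \ref{lm:mon.sva.gen} together with Lemma \ref{lm:mon.sva}. For part (a), I would observe that $J_{\rho T}(z)=z$ is equivalent, by definition of the resolvent, to $0\in\rho T(z)$, i.e., to $0\in T(z)$; under the hypothesis this cannot happen, so $J_{\rho T}(z)\neq z$ for every $\rho>0$, and $\varphi(\rho,z)=\rho\|J_{\rho T}(z)-z\|>0$. Since $a,b\geq 0$ and $\rho>0$, the defining relation $\psi(\rho,z)=a\rho^2+b\rho+\varphi(\rho,z)^2$ then yields $\psi(\rho,z)>0$.

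For part (b), strict monotonicity is immediate from the first inequality in \eqref{eq:1103}: for $0<\mu<\rho$, $\psi(\rho,z)\geq(\rho/\mu)\psi(\mu,z)>\psi(\mu,z)$, where the strict inequality uses (a). Continuity of $\rho\mapsto\psi(\rho,z)$ on $(0,\infty)$ follows from Lemma \ref{lm:mon.sva}(a) together with the continuity of the polynomial $a\rho^2+b\rho$. The asymptotic behaviour is read off Lemma \ref{lm:mon.sva}(b): fixing any $\mu_0>0$, the first inequality in \eqref{eq:1300} with $\mu=\mu_0$ gives $\varphi(\rho,z)\geq(\rho/\mu_0)\varphi(\mu_0,z)\to\infty$ as $\rho\to\infty$, whence $\psi(\rho,z)\to\infty$; the same inequality, applied with the pair $(\mu,\rho)$ relabeled as $(\rho,\mu_0)$ for $0<\rho<\mu_0$, rearranges to $\varphi(\rho,z)\leq(\rho/\mu_0)\varphi(\mu_0,z)\to 0$ as $\rho\to 0^+$, and combined with $a\rho^2+b\rho\to 0$ this yields $\psi(\rho,z)\to 0$.

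For part (c), by (a) and (b) the map $\psi(\cdot,z)$ is a strictly increasing continuous bijection from $(0,\infty)$ onto $(0,\infty)$, so the preimage of $[\theta_-,\theta_+]$ is automatically a closed interval $[\rho_-,\rho_+]$ with $\psi(\rho_-,z)=\theta_-$ and $\psi(\rho_+,z)=\theta_+$, i.e., \eqref{eq:rhom}. The ratio estimate then drops out of the second inequality in \eqref{eq:1103} applied to $\mu=\rho_-<\rho_+=\rho$: $\theta_+=\psi(\rho_+,z)\leq(\rho_+/\rho_-)^4\psi(\rho_-,z)=(\rho_+/\rho_-)^4\theta_-$, which rearranges to $\rho_+/\rho_-\geq\sqrt[4]{\theta_+/\theta_-}$. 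No step is expected to pose a serious obstacle; the only care needed is to orient the estimates in \eqref{eq:1300} correctly when sending $\rho\to 0^+$, and to keep track of the fact that the polynomial term $a\rho^2+b\rho$ is controlled at both limits even when $a$ or $b$ vanishes (so that the conclusion rests on $\varphi(\rho,z)^2$).
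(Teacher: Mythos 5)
Your proof is correct and fills in exactly the argument the paper defers to: the paper's proof simply states that it follows the outline of Monteiro--Svaiter's Lemma 4.4 with Lemma \ref{lm:mon.sva.gen} replacing their Lemma 4.3, and your write-up is precisely that outline (resolvent fixed-point characterization for (a), the first inequality in \eqref{eq:1103} for monotonicity and the limits, and the second inequality with $(\mu,\rho)=(\rho_-,\rho_+)$ for the ratio bound). No gaps; the orientation of the estimates as $\rho\to 0^+$ and the use of $\psi(\mu,z)>0$ for strictness are handled correctly.
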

\begin{proof}
 The proof follows the same outline of \cite[Lemma 4.4]{mon.sva-newton.siam12}'s proof, using Lemma \ref{lm:mon.sva.gen} instead of
 \cite[Lemma 4.3]{mon.sva-newton.siam12}.
\end{proof}

\mgap

Next lemma, which is motivated by \cite[Lemma 4.6]{mon.sva-newton.siam12},  justifies the bracketing/bisection procedure as presented in Algorithm \ref{alg_proc} in the light of Problem B - see \eqref{eq:sorvete} and \eqref{eq:sorvete2}. 

\mgap

\begin{lemma}
 \label{lm:just}
Let $\psi(\cdot, \cdot)$ be as in \eqref{eq:sorvete2} and 
let $\rho_{-},\rho_{+}>0$ be as in \eqref{eq:rhom}.
Let $z\in \HH$ be such that $0\notin T(z)$ and let $0<\theta_{-}<\theta_+<\infty$.
The following holds:
\begin{itemize}
\item[\emph{(a)}] If $\psi(\rho,z)<\theta_{-}$, then $\rho<\rho_{-}$ and $\rho_+\leq \dfrac{\rho\theta_+}{\psi(\rho, z)}$.
\item[\emph{(b)}] If $\psi(\rho,z)>\theta_{+}$, then $\dfrac{\rho\theta_{-}}{\psi(\rho, z)}\leq \rho_{-}$ and $\rho_{+}<\rho$.
\end{itemize}
\end{lemma}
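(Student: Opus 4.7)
The plan is to prove each item as a quantitative sharpening of the strict monotonicity of $\rho\mapsto \psi(\rho,z)$ established in Lemma \ref{lm:mon.sva.gen02}(b), using the left-hand linear scaling inequality from Lemma \ref{lm:mon.sva.gen}. The qualitative part (the strict inequalities between $\rho$ and $\rho_{-}$ or $\rho_{+}$) is immediate from monotonicity and the characterization \eqref{eq:rhom}; the quantitative bounds ($\rho_+\leq \rho\theta_+/\psi(\rho,z)$ in (a), and $\rho\theta_-/\psi(\rho,z)\leq \rho_-$ in (b)) come from choosing the correct pair $(\mu,\rho)$ in the left inequality of \eqref{eq:1103}.

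For item (a), the assumption $\psi(\rho,z)<\theta_-=\psi(\rho_-,z)$ and the strict monotonicity of $\psi(\cdot,z)$ (Lemma \ref{lm:mon.sva.gen02}(b)) give $\rho<\rho_-$; since $\rho_-\leq \rho_+$ by \eqref{eq:rhom}, we have $0<\rho<\rho_+$. I then apply the first inequality in \eqref{eq:1103} with $\mu:=\rho$ and with $\rho$ there replaced by $\rho_+$ to obtain
\[
\frac{\rho_+}{\rho}\,\psi(\rho,z)\;\leq\;\psi(\rho_+,z)\;=\;\theta_+,
\]
and dividing by $\psi(\rho,z)>0$ (positivity from Lemma \ref{lm:mon.sva.gen02}(a), noting $0\notin T(z)$) yields $\rho_+\leq \rho\theta_+/\psi(\rho,z)$.

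Item (b) is handled symmetrically. The hypothesis $\psi(\rho,z)>\theta_+=\psi(\rho_+,z)$ combined with strict monotonicity gives $\rho>\rho_+\geq \rho_-$, so $0<\rho_-<\rho$. Applying the first inequality in \eqref{eq:1103} now with $\mu:=\rho_-$ produces
\[
\frac{\rho}{\rho_-}\,\psi(\rho_-,z)\;\leq\;\psi(\rho,z),
\]
that is, $\rho\theta_-/\rho_-\leq \psi(\rho,z)$, and rearranging gives the required $\rho\theta_-/\psi(\rho,z)\leq \rho_-$.

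I do not anticipate any genuine obstacle: the lemma is essentially a bookkeeping statement converting the qualitative monotonicity of $\psi(\cdot,z)$ into explicit interval updates (which is precisely what the bracketing phase of Algorithm \ref{alg_proc} needs). The only point requiring care is to pick the correct orientation of the scaling inequality in each case, i.e., to use the \emph{linear} (left-hand) bound in \eqref{eq:1103} so that the resulting estimate is the tightest possible one and is expressed in terms of the known quantity $\psi(\rho,z)$ rather than in terms of $\psi$ evaluated at the unknown endpoints $\rho_{\pm}$.
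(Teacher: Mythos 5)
Your proof is correct and follows essentially the same route as the paper: the strict inequalities come from the strict monotonicity of $\psi(\cdot,z)$ in Lemma \ref{lm:mon.sva.gen02}(b) together with \eqref{eq:rhom}, and the quantitative bounds come from the first (linear) inequality in \eqref{eq:1103} applied with $(\mu,\rho)\leftarrow(\rho,\rho_+)$ in (a) and $(\mu,\rho)\leftarrow(\rho_-,\rho)$ in (b), exactly as in the paper (which only sketches (b) as symmetric). Your explicit appeal to Lemma \ref{lm:mon.sva.gen02}(a) for the positivity of $\psi(\rho,z)$ before dividing is a small but welcome addition of care.
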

\begin{proof}
(a) If $\psi(\rho,z)<\theta_{-}$, using the fact that $\theta_{-}=\psi(\rho_{-},z)$ - see \eqref{eq:rhom} - and Lemma \ref{lm:mon.sva.gen02}(b) we obtain $\rho<\rho_{-}$. Moreover, since we now know that $\rho<\rho_{-}<\rho_{+}$, one can
use Lemma \ref{lm:mon.sva.gen} with $(\mu,\rho)\leftarrow (\rho,\rho_{+})$ to conclude that
%
%
 $ \dfrac{\rho_{+}}{\rho} \psi(\rho, z)\leq \psi(\rho_{+},z)$
and hence, since $\theta_{+}=\psi(\rho_{+},z)$, that $\rho_+\leq \dfrac{\rho\theta_+}{\psi(\rho, z)}$.

(b) The proof follows the same outline of the proof of item (a).
\end{proof}

 \mgap

\noindent
We now make a few remarks about Lemma \ref{lm:just} (see also Figure \ref{fig01}).
\begin{itemize}
\item[\mbox{(i)}] Lemma \ref{lm:just}(a) says that if \eqref{eq:sorvete} fails, for some $\rho>0$, because $\psi(\rho,z)<\theta_{-}$, then it follows that $t_{-}:=\rho$ and $t_{+}:=\dfrac{\rho\theta_+}{\psi(\rho, z)}$ define a closed interval $[t_{-}, t_{+}]$ which contains the interval $[\rho_{-}, \rho_+]$ of all possible $\rho>0$ satisfying \eqref{eq:sorvete}. A similar observation also applies to Lemma \ref{lm:just}(b). 
This is what we refer to in Algorithm \ref{alg_proc} as {\it Bracketing}.
 \item[\mbox{(ii)}] Since, as we pointed out in item (i) above, $[t_{-}, t_{+}]\supset [\rho_{-}, \rho_{+}]$, a simple bisection procedure on the interval $[t_{-}, t_{+}]$ will finish with some $\rho>0$ in $[\rho_{-}, \rho_{+}]$, i.e., some $\rho>0$ satisfying Problem B. This is what we refer to in Algorithm \ref{alg_proc} as {\it Bisection}.
\end{itemize}

\begin{figure}
 \caption{See Problem B, Lemma \ref{lm:just} and the remarks following it. }
  \label{fig01}
  \centering
  \begin{tikzpicture}
    \draw[->,line width = 0.50mm] (-1.2,0) -- (5.8,0) node[right] {$\rho$};
    \draw[->,line width = 0.50mm] (0,-1.2) -- (0,4.7) node[above] {$\psi(\cdot,z)$};
    \draw[line width = 0.55mm] (0.05,0.05) parabola (5.5,4.5);
    \draw[line width = 0.35mm] (0,0) circle (0.8mm);
    \node[below left,black] at (0,0) {0};
    \node[below left] at (1.38,0) {$t\_$};
    \node[below left] at (5.5,0) {$t_+$};
    \draw[-,line width = 0.50mm] (5.2,-0.05) -- (5.2,0.15);
    \draw[-,line width = 0.50mm] (1.1,-0.05) -- (1.1,0.15);
    \draw [dashed,line width = 0.50mm] (0,0.55) -- (1.9,0.55);
    \draw [dashed,line width = 0.50mm] (1.9,0) -- (1.9,0.55);
    \draw [dashed,line width = 0.50mm] (0,1.5) -- (3.15,1.5);
    \draw [dashed,line width = 0.50mm] (3.15,0) -- (3.15,1.5);
     \node [below left] at (2.3,0) {$\rho\_$};
     \node [below left] at (3.5,0) {$\rho_+$};
     \node [below left] at (2.3,0) {$\rho\_$};
     \node[above left] at (0,0.3) {$\theta\_$};
     \node[above left] at (0,1.2) {$\theta_+$};
\end{tikzpicture} \\
\end{figure}
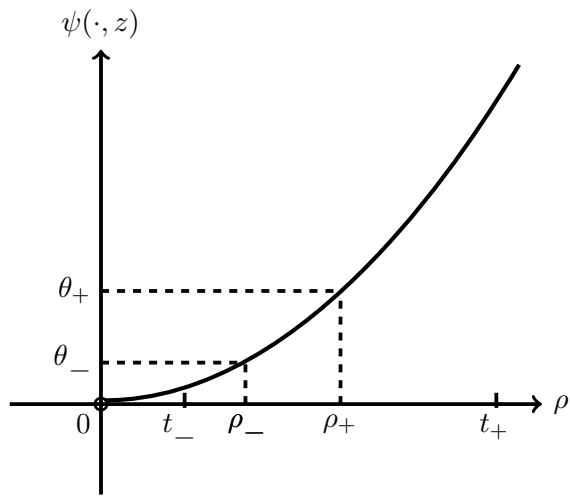

\newpage

\begin{algorithm}[H]
%
 \caption{Bracketing/Bisection procedure for solving Problem A}
\label{alg_proc}
\SetAlgoLined
\KwOutput{$0<\underline{\theta}<\overline{\theta}<2$, $\hat \rho>0$ and $\hat \delta>0$}
   \Fn{\FMain{$A,B,C,D,G,z,w,\ell,\beta, m$}}{
 Choose $\rho^0>0$ and compute $x^0 = J_{\rho^0\left(A+D_{(Gz)}\right)}\left(Gz + \rho^0 w -\rho^0(B+C)(Gz)\right)$\\
   \If{$\underline{\theta}\leq 4\ell^2(\rho^0)^2 + (\beta^{-1}+\hat \delta)\rho^0 +
(m\rho^0\norm{x^0-Gz})^2\leq \overline{\theta}$}
   {\Return $\rho\leftarrow \rho^0$, $x\leftarrow x^0$ and \\
   $y\leftarrow (Gz-x^0)/\rho^0 + w
   +\left[B(x^0) - B(Gz)\right] + \left[D(x^0) - D_{(Gz)}(x^0)\right]$ }
   \If{$4\ell^2(\rho^0)^2 + (\beta^{-1}+\hat \delta)\rho^0 + (m\rho^0\norm{x^0-Gz})^2<\underline{\theta}$}
   {$t_{-}\leftarrow \rho^0$ and $t_{+}\leftarrow \dfrac{\rho^0\overline{\theta}}{4\ell^2(\rho^0)^2 + 
(\beta^{-1}+\hat \delta)\rho^0 + (m\rho^0\norm{x^0-Gz})^2}$}
  \If{$4\ell^2(\rho^0)^2 + (\beta^{-1}+\hat \delta)\rho^0 + (m\rho^0\norm{x^0-Gz})^2>\overline{\theta}$}
   {$t_{-}\leftarrow \dfrac{\rho^0\underline{\theta}}{4\ell^2(\rho^0)^2 + (\beta^{-1}+\hat \delta)\rho^0 + (m\rho^0\norm{x^0-Gz})^2}$ and $t_{+}\leftarrow \rho^0$}
 Set $\tilde \rho = \sqrt{t_{-}t_{+}}$ and        
         compute $\tilde x = J_{\tilde \rho\left(A+D_{(Gz)}\right)}\left(Gz + \tilde \rho w -\tilde \rho(B+C)(Gz)\right)$\\[1mm]
         \label{ln:line_set}
   \If{$\underline{\theta}\leq 4\ell^2(\tilde \rho)^2 + (\beta^{-1}+\hat \delta)\tilde \rho + (m\tilde \rho\norm{\tilde x-Gz})^2\leq \overline{\theta}$}
   {\Return $\rho\leftarrow \tilde \rho$, $x\leftarrow \tilde x$ and \\
    $y\leftarrow (Gz-\tilde x)/\tilde \rho + w
   +\left[B(\tilde x) - B(Gz)\right] + \left[D(\tilde x) - D_{(Gz)}(\tilde x)\right]$ }
   \If{$4\ell^2(\tilde \rho)^2 + (\beta^{-1}+\hat \delta)\tilde \rho + (m\tilde \rho\norm{\tilde x-Gz})^2>  \overline{\theta}$}
   {$t_{+}\leftarrow \tilde \rho$}
   \If{$4\ell^2(\tilde \rho)^2 + (\beta^{-1}+\hat \delta)\tilde \rho + (m\tilde \rho\norm{\tilde x-Gz})^2<  \underline{\theta}$}
   {$t_{-}\leftarrow \tilde \rho$}
    Go to line \ref{ln:line_set}
}
{\Return $(\rho, x, y)$
           }
\end{algorithm}

\mgap
\mgap

Note that Algorithm \ref{alg_proc} is nothing but a bracketing/bisection procedure specialized to Problem A (see Proposition \ref{pr:roseli}, Lemma \ref{lm:just} and Figure \ref{fig01}).

\mgap
\mgap

Next we show that Algorithm \ref{alg_proc} finishes with $\rho>0$ and $x\in \HH$ solving Problem A. 

\bprop \lab{pr:bterminates}
Consider the setting of \emph{Problem A} above and assume that $w\notin (A+B+C+D)(Gz)$.
Then, the bracketing/bisection procedure, as presented in \emph{Algorithm \ref{alg_proc}}, terminates with a triple
$(\rho, x, y)$ with $\rho>0$ and $x\in \HH$ satisfying \eqref{eq:cha3}-\eqref{eq:cha4} and 
\[
y = (G z-x)/\rho + w + \left[B(x) - B(G z)\right] + \left[D (x) - D_{(G z)}(x)\right].
\]
\eprop
\bproo
The proof follows from Proposition \ref{pr:roseli} and Lemmas \ref{lm:mon.sva.gen02} and \ref{lm:just}. 
\eproo

\def\cprime{$'$} \def\cprime{$'$}

\end{document}